\renewcommand{\P}{\mathbb{P}}
\newcommand{\ds}{\displaystyle}
\newcommand{\f}[2]{\ds\frac{{#1}}{{#2}}}
\newcommand{\st}{\::\:}
\definecolor{lightgray}{rgb}{0.8, 0.8, 0.8}
\definecolor{darkgray}{rgb}{0.7, 0.7, 0.7}
\definecolor{darkblue}{rgb}{0, 0, .4}
\newcommand{\fnmatrix}[2]{\mbox{\begin{footnotesize}$\begin{array}{#1}#2\end{array}$\end{footnotesize}}}
\newcommand{\eval}[2][\right]{\relax\ifx#1\right\relax \left.\fi#2#1\rvert}
\let\c@table\c@figure
\newcounter{todocounter}
\theoremstyle{plain}
\newtheorem{theorem}{Theorem}[section]
\newtheorem{proposition}[theorem]{Proposition}
\newtheorem{conjecture}[theorem]{Conjecture}
\newfont{\footsc}{cmcsc10 at 8truept}
\newfont{\footbf}{cmbx10 at 8truept}
\newfont{\footrm}{cmr10 at 10truept}
\renewenvironment{abstract}%
                {
                  \begin{list}{}%
                     {\setlength{\rightmargin}{1in}%
                      \setlength{\leftmargin}{1in}}%
                   \item[]\ignorespaces\begin{small}}%
                 {\end{small}\unskip\end{list}}
\author{Jay Pantone\qquad\qquad Vincent Vatter\thanks{Vatter's research was sponsored by the National Security Agency under Grant Number H98230-12-1-0207 and the National Science Foundation under Grant Number DMS-1301692.  The United States Government is authorized to reproduce and distribute reprints not-withstanding any copyright notation herein.}\\[-0.25ex]
\small Department of Mathematics\\[-0.5ex]
\small University of Florida\\[-0.5ex]
\small Gainesville, Florida, USA\\[-1.5ex]
}
\title{\sc On the Rearrangement Conjecture for Generalized Factor Order Over $\mathbb{P}$}
\date{}
\begin{document}

\maketitle

\begin{abstract}
The Rearrangement Conjecture states that if two words over $\mathbb{P}$ are Wilf-equivalent in the factor order on $\mathbb{P}^\ast$ then they are rearrangements of each other. We introduce the notion of strong Wilf-equivalence and prove that if two words over $\mathbb{P}$ are strongly Wilf-equivalent then they are rearrangements of each other. We further conjecture that Wilf-equivalence implies strong Wilf-equivalence.
\end{abstract}

\section{Introduction}

For ordinary words (finite sequences) $u$ and $w$, we say that $u$ is a \emph{factor} of $w$ if $w=w^{(1)}uw^{(2)}$ for possibly empty words $w^{(1)}$ and $w^{(2)}$. We are concerned with the \emph{generalized factor order}, which extends the factor order to words over an arbitrary poset $P$. Given words $u,v\in P^\ast$, we say that $v$ \emph{dominates} $u$ if they have the same length and $v_i\ge_P u_i$ for all $i$. We say that $u$ is a \emph{factor} of $w$ if $w=w^{(1)}vw^{(2)}$ for a word $v$ which dominates $u$, and in this case we write $w\ge_\text{gfo} u$. For example, when $P = \mathbb{P}$, the positive integers, we have $1423314 \ge_{\text{gfo}} 3123$ because $4233$ dominates $3123$. This paper is solely concerned with the case where $P=\mathbb{P}$.

The primary generating function we are interested in enumerates the set of all words in $\mathbb{P}^\ast$ (which could also be thought of as compositions) according to their length $|w|$, the sum of their entries $\|w\|$, and the number of factors dominating $u$ they contain:
\[
A_u(x,y,z) = \ds\sum_{w \in \P^\ast} x^{|w|}y^{\|w\|}z^{\text{\# of factors dominating $u$}}.
\]

The generalized factor order was introduced by Kitaev, Liese, Remmel, and Sagan~\cite{kitaev:rationality-irr:}, who defined the words $u,v \in \mathbb{P}^\ast$ to be \emph{Wilf-equivalent} if $A_u(x,y,0) = A_v(x,y,0)$. They also made what has become known as the Rearrangement Conjecture. To state this conjecture, we need another definition. The words $u$ and $v$ of equal length are said to be \emph{rearrangements} if $u$ and $v$ have the same multiset of values, i.e., if there exists a permutation $\pi$ of the set $\{1,2,\dots,|u|\}$ such that $v = u_{\pi(1)}u_{\pi(2)}\cdots u_{\pi(|u|)}$.

\newtheorem*{rearrangement-conj}{The Rearrangement Conjecture}
\begin{rearrangement-conj}
If two words in $\P^\ast$ are Wilf-equivalent then they are rearrangements of each other.
\end{rearrangement-conj}

Note that the converse to the Rearrangement Conjecture is false. Following the methods of~\cite{kitaev:rationality-irr:}, we can construct automata which recognize words avoiding a given factor. From this we find that
	\[A_{122}(x,y,0) = \f{1 - 2y + (1+x)y^2 - xy^3 + x^2y^4}{1 - (2+x)y + (1+2x)y^2 - (x+x^2)y^3 + x^2y^4},\]
while
	\[A_{212}(x,y,0) = \f{1-2y+(1+x)y^2-(x-x^2)y^3+x^3y^5}{(1-y+x^2y^3)(1-(1+x)y+xy^2-x^2y^3)}.\]
In particular, $[x^4y^7]A_{122}(x,y,0) = 13$, while $[x^4y^7]A_{212}(x,y,0) = 12$.

In \cite{kitaev:rationality-irr:}, the authors also introduced a stronger notion of Wilf-equivalence. Given words $u,w\in\P^\ast$ we define $\text{Em}(u,w)$ to consist of the set of indices of letters in $w$ which begin a factor dominating $u$. The words $u$ and $v$ are said to be \emph{super-strongly Wilf-equivalent} if there is a bijection $f\st\P^\ast\rightarrow\P^\ast$ such that $|f(w)|=|w|$, $\|f(w)\|=\|w\|$, and $\text{Em}(u,f(w))=\text{Em}(v,w)$ for all $w\in\P^\ast$. (Kitaev, Liese, Remmel, and Sagan had called this property ``strong Wilf-equivalence'', but we use that term to mean something different, in keeping with related literature.) This is such a stringent condition that while the words $2143$ and $3412$ are trivially Wilf-equivalent (because one is the reverse of the other), they are not super-strongly Wilf-equivalent (see \cite{kitaev:rationality-irr:}).

We focus on a condition which lies between these two; we say that $u$ and $v$ are \emph{strongly Wilf-equivalent} if $A_u(x,y,z)=A_v(x,y,z)$. Note that super-strong Wilf-equivalence implies strong Wilf-equivalence, which in turn implies Wilf-equivalence. Our main result is the following.

%\note{JP: Possible addition: our first approach was to see if the overlap set determined the cluster generating function, as it does (I think?) with consecutive permutations, but this turned out to be false (provide example).\\ VV: Possibly for conclusion.}

\begin{theorem}
\label{thm-rearrangement}
If two words in $\P^\ast$ are strongly Wilf-equivalent then they are rearrangements of each other.
\end{theorem}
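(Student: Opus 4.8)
The plan is to prove that the series $A_u(x,y,z)$ determines the multiset $\{u_1,\dots,u_n\}$; since $A_u=A_v$, this is exactly the statement of the theorem. Write $m(w)$ for the number of factors of $w$ dominating $u$, and record the multiset of $u$ through the sizes $c_r:=\#\{i:u_i\ge r\}$ of its super-level sets $\chi^{(r)}(u):=\{i:u_i\ge r\}$, $r\ge 1$; the weakly decreasing sequence $(c_1,c_2,\dots)$ is equivalent to the multiset of letters. Two invariants are immediate: $[z^1]A_u(x,y,z)$ is the generating function for words $w$ with $m(w)=1$, and its term of least $x$-degree, then least $y$-degree, is $x^{|u|}y^{\|u\|}$, produced by the length-$|u|$ words dominating $u$ (the lightest being $u$ itself); so $A_u=A_v$ already forces $|u|=|v|=:n$ and $\|u\|=\|v\|$.

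For the remaining structure I would run the Goulden--Jackson cluster method in the version that records length by $x$ and weight by $y$. Substituting $z=1+t$ and expanding $(1+t)^{m(w)}=\sum_{S}t^{|S|}$ over subsets $S$ of the set of factors of $w$ dominating $u$, the decomposition of a marked word into isolated letters and maximal overlapping ``clusters'' gives
\[
A_u(x,y,1+t)=\pa{\,1-\f{xy}{1-y}-\sum_{k\ge1}t^{k}D_k^u(x,y)\,}^{-1},
\]
where a cluster counted by $D_k^u$ is a chain of $k$ pairwise overlapping marked factors dominating $u$, recorded by the set $\Sigma=\{0=\sigma_1<\sigma_2<\dots<\sigma_k\}$ of the offsets of their left endpoints (each consecutive gap at most $n-1$); such a cluster has length $n+\sigma_k$ and least possible weight
\[
W_u(\Sigma)=\sum_{p\in\mathbb Z}\max\{\widetilde u_{p-\sigma}:\sigma\in\Sigma\}=\sum_{r\ge1}\bigl|\Sigma+\chi^{(r)}(u)\bigr|,
\]
with $\widetilde u$ denoting $u$ padded by zeros and $+$ the Minkowski sum, so that $D_k^u=\sum_{\Sigma}x^{\,n+\sigma_k}y^{\,W_u(\Sigma)}/(1-y)^{\,n+\sigma_k}$. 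Then $A_u=A_v$ gives $D_k^u=D_k^v$ for every $k$, and comparing coefficients of $x^{\,n+s}$ (after clearing the denominator) shows that the polynomial $\sum_{|\Sigma|=k,\ \sigma_k=s}y^{\,W_u(\Sigma)}$, and in particular the number $\sum_{|\Sigma|=k}W_u(\Sigma)$, is an invariant for all $k$ and $s$.

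Finally I would recover the $c_r$. The identity $\max\{a_\sigma\}=\sum_{\varnothing\ne T}(-1)^{|T|+1}\min\{a_\sigma:\sigma\in T\}$ turns $W_u(\Sigma)$ into $\sum_{\varnothing\ne T\subseteq\Sigma}(-1)^{|T|+1}\Phi_u(T)$, where $\Phi_u(T)=\sum_p\min\{\widetilde u_{p-\sigma}:\sigma\in T\}=\sum_{r\ge1}\#\{\text{translates of }T\text{ contained in }\chi^{(r)}(u)\}$; this depends on $T$ only up to translation, vanishes unless $\operatorname{diam}T\le n-1$, and satisfies $\sum_{|\tau|=k}\Phi_u(\tau)=\sum_{r\ge1}\binom{c_r}{k}=:Q_k$ (the sum over all translation-classes $\tau$ of size $k$). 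Expanding the invariant $\sum_{|\Sigma|=k}W_u(\Sigma)=\sum_{\tau}(-1)^{|\tau|+1}\gamma_k(\tau)\Phi_u(\tau)$, where $\gamma_k(\tau)$ is the explicit, $u$-independent count of pairs $(\Sigma,T)$ with $|\Sigma|=k$ (valid), $T\subseteq\Sigma$, and $T$ a translate of $\tau$, the terms with $|\tau|=k$ contribute precisely $(-1)^{k+1}Q_k$. So by induction on $k$: once the contributions of all $\tau$ with $|\tau|<k$ are known, one solves for $Q_k$, and the sequence $(Q_k)_{k\ge1}$ determines the partition $(c_r)_r$ and hence the multiset of $u$; thus $u$ and $v$ are rearrangements.

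I expect the inductive step to be the real obstacle, and the bulk of the work. The smaller-shape contributions are $\sum_{|\tau|=j}\gamma_k(\tau)\Phi_u(\tau)$ for $j<k$; the size-$2$ values $\Phi_u(\{0,d\})=\sum_i\min(u_i,u_{i+d})$ are visible directly in $D_2^u$, but the higher $\Phi_u(\tau)$ are only accessible from the $D_j^u$ up to the bundling that length and weight impose, so one must prove that these shape-weighted sums reduce to data already fixed --- presumably using the combinatorics of the $\gamma_k(\tau)$, the reversal symmetry of $\Phi_u$, and the nesting $\chi^{(1)}\supseteq\chi^{(2)}\supseteq\cdots$. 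That such a global argument is genuinely needed is clear from the failure of the simplest invariants: $n$, $\|u\|$, $\max_i u_i$, and $\sum_r c_r^{\,2}$ (the last already visible in $D_2^u$) agree for the non-rearrangements $1233334$ and $2222344$, whose column sequences $(7,6,5,1)$ and $(7,7,3,2)$ have the same first and second power sums.
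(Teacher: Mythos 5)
Your setup is correct and closely mirrors the paper's machinery: you pass through the Goulden--Jackson cluster expansion, observe that strong Wilf-equivalence forces equality of the cluster series (your $D_k^u$ are exactly the $[z^k]$-coefficients of the paper's $C_u$), recover $|u|$ and $\|u\|$, and then try to extract the sorted content of $u$ from the cluster data. Your Minkowski-sum reformulation $W_u(\Sigma)=\sum_{r\ge1}|\Sigma+\chi^{(r)}(u)|$, the inclusion--exclusion into the $\Phi_u(T)$, and the identity $\sum_{|\tau|=k}\Phi_u(\tau)=\sum_r\binom{c_r}{k}=Q_k$ are all correct and pleasant, and the sequence $(Q_k)$ would indeed determine the conjugate partition $(c_r)$, hence the multiset of $u$.

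However, the proof as written has a genuine gap, and you flag it yourself: the induction that is supposed to produce $Q_k$ from the invariant $\sum_{|\Sigma|=k}W_u(\Sigma)$ requires knowing the lower-order shape-weighted sums $\sum_{|\tau|=j}\gamma_k(\tau)\Phi_u(\tau)$ for $j<k$. These are \emph{not} expressible in terms of $Q_1,\dots,Q_{k-1}$ alone, because $\gamma_k(\tau)$ is not constant across translation classes $\tau$ of a fixed size $j$ (e.g.\ $\gamma_k(\{0,1\})\ne\gamma_k(\{0,n-1\})$, since the number of valid offset sets of size $k$ containing a prescribed pair depends on the gap). So the individual $\Phi_u(\tau)$'s, not just their size-aggregates, are needed, and you have not shown that the cluster series pins those down. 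You correctly anticipate that more of the graded data (the $x$- and $z$-grading, i.e.\ $\sum_{|\Sigma|=k,\sigma_k=s}W_u(\Sigma)$ for each $s$) would have to be brought in, and that reversal symmetry and the nesting of the $\chi^{(r)}$ should help --- but none of this is carried out, and it is precisely where the content of the theorem lives. (Your example $1233334$ vs.\ $2222344$ with equal $Q_1,Q_2$ is a good sanity check that the naive low-moment version fails.)

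By contrast, the paper avoids solving for the $\Phi_u(\tau)$ at all. It works with the \emph{minimal}-cluster series $M_u$ and isolates, for each $i$, a hand-picked signed combination of coefficients $[x^{2k+i-2}z^k]M_u - \sum_j [x^{2k+j-2}z^{k-1}]M_u$, differentiated at $y=1$. A column-by-column analysis of pre-clusters (Propositions 4.1--4.5: a bijection killing top/bottom columns of height $\le k-2$, an exact count of top/bottom columns of height $\ge k-1$ giving $(k-1)\lambda_1+\lambda_2$, and a cancellation argument for middle columns) shows this quantity is a strictly-positive-coefficient linear combination of $\lambda_1,\dots,\lambda_i$ only, yielding $\lambda_1,\dots,\lambda_{k-1}$ triangularly, with $\lambda_k$ coming from $\|u\|$. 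That is the combinatorial work your proposal would need a substitute for; as it stands, your argument reduces the problem to an unproved (and, as stated, nontrivial) invariance claim rather than proving the theorem.
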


Moreover, we conjecture that the following is true, which would imply the full Rearrangement Conjecture.

\begin{conjecture}
\label{conj-wilf}
If two words in $\P^\ast$ are Wilf-equivalent then they are strongly Wilf-equivalent.
\end{conjecture}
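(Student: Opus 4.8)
The plan is to route the problem through the Goulden--Jackson cluster method, adapted to the generalized factor order over $\mathbb{P}$ in the spirit of the automaton constructions of~\cite{kitaev:rationality-irr:}. Write $\ell(x,y)=\frac{xy}{1-y}$ for the generating function of a single letter and $\mathcal{C}_u(x,y,t)$ for the \emph{cluster generating function} of $u$, where a cluster is a word carrying a connected chain of marked occurrences of factors dominating $u$, weighted by (length in $x$, sum in $y$, number of marked occurrences in $t$). The cluster theorem then gives the identity
\[
A_u(x,y,z)=\frac{1}{1-\ell(x,y)-\mathcal{C}_u(x,y,z-1)}.
\]
Hence $u$ and $v$ are Wilf-equivalent if and only if $\mathcal{C}_u(x,y,-1)=\mathcal{C}_v(x,y,-1)$, and strongly Wilf-equivalent if and only if $\mathcal{C}_u(x,y,t)=\mathcal{C}_v(x,y,t)$ as formal power series in $t$. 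So Conjecture~\ref{conj-wilf} is equivalent to the assertion that the entire cluster generating function of a word is recovered from its value at $t=-1$.

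Next I would unwind the structure of $\mathcal{C}_u$. Building a cluster one occurrence at a time, each new copy of ``a factor dominating $u$'' overlaps the previous copy with some shift $s\in\{1,\dots,|u|-1\}$; the feature peculiar to $\mathbb{P}$ (as opposed to exact word matching) is that the overlap region must simultaneously dominate two shifted copies of $u$, and re-imposing the stronger lower bounds on letters already placed costs exactly a monomial power of $y$ — but one whose exponent depends on the current profile of lower bounds, not on $u$ and $s$ alone. This produces a transfer-matrix formula $\mathcal{C}_u(x,y,t)=t\,\mathbf{v}_u^{\mathsf T}\bigl(I-tM_u(x,y)\bigr)^{-1}\mathbf{w}_u$, whose states record the current vector of lower bounds on the last $|u|-1$ positions. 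From Wilf-equivalence alone one can already read off the ``cheap'' invariants: $|u|$ and $\|u\|$ appear as the least pair $(n,m)$ at which $[x^ny^m]A_u(x,y,0)$ falls below $[x^ny^m]\frac{1}{1-\ell}$, and hence the one-cluster term $[t^1]\mathcal{C}_u(x,y,t)=x^{|u|}y^{\|u\|}/(1-y)^{|u|}$, which depends only on $|u|$ and $\|u\|$, is forced to agree for $u$ and $v$. The program is then to recover the two-cluster term $[t^2]\mathcal{C}_u$ — equivalently the pairwise overlap polynomial $\sum_{s=1}^{|u|-1}y^{\sigma_s(u)}$, where $\sigma_s(u)$ is the extra $y$-weight created by an $s$-overlap on a freshly placed copy — and then to bootstrap to all higher terms by induction on the number of marked occurrences.

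The main obstacle is precisely the non-scalarity of the transfer matrix $M_u$. For a word such as $121$, the profile of lower bounds at the right end of a cluster genuinely depends on the sequence of shifts used, so $\mathcal{C}_u(x,y,t)$ is not a linear-fractional function of $t$, and the single scalar equation $\mathbf{v}_u^{\mathsf T}(I+M_u)^{-1}\mathbf{w}_u=\mathbf{v}_v^{\mathsf T}(I+M_v)^{-1}\mathbf{w}_v$ extracted from the $t=-1$ specialization does not visibly determine the individual coefficients $[t^k]\mathcal{C}_u$. What is really wanted is a rigidity statement: that the combinatorial data of $u$ feeding into $M_u$ (its set of self-overlaps together with their $y$-weights) is pinned down by $\mathcal{C}_u(x,y,-1)$, perhaps with the help of $|u|$, $\|u\|$, and the multiset of letters of $u$. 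One might first try to deduce that multiset directly from Wilf-equivalence — that would already be a piece of the Rearrangement Conjecture — but even granting it, controlling the higher-order coefficients of the matrix resolvent is where I expect the argument to stall, which is presumably why the statement is at present only a conjecture.
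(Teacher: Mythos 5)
You have not proved the statement, but no proof should be expected: this is Conjecture~\ref{conj-wilf} of the paper, and the paper itself offers no proof --- only computational evidence (all Wilf-equivalence classes of factors of weight up to 11, checked against words of weight up to 20) --- so there is no ``paper proof'' to measure your argument against. What you have done is rederive, correctly, exactly the reformulation the paper uses in Section~2: the cluster identity $A_u(x,y,z)=1/\bigl(1-\tfrac{xy}{1-y}-C_u(x,y,z-1)\bigr)$ shows that Wilf-equivalence amounts to $C_u(x,y,-1)=C_v(x,y,-1)$ and strong Wilf-equivalence to $C_u(x,y,z)=C_v(x,y,z)$, so the conjecture is precisely the assertion that the $z=-1$ specialization determines the full cluster generating function. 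Your auxiliary observations are also sound: $|u|$ and $\|u\|$ are forced by Wilf-equivalence (the paper gets this from the minimal exponents of $x$ and $y$ in $M_u(x,y,-1)$), and hence the one-cluster term $x^{|u|}y^{\|u\|}/(1-y)^{|u|}$ agrees automatically.

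The gap you name at the end --- that the transfer-matrix/self-overlap data of $u$ is not visibly recoverable from the single scalar identity at $z=-1$, already for words like $121$ where the overlap profile depends on the history of shifts --- is genuinely the open content of the conjecture, and you are right that this is where any such argument currently stalls. Note how the paper works around it rather than through it: its main result, Theorem~\ref{thm-rearrangement}, assumes equality of the \emph{full} cluster series $M_u(x,y,z)=M_v(x,y,z)$ and extracts the partition of $\|u\|$ from specific coefficients $[x^az^b]M_u$, which is strictly weaker than what the conjecture asks for; and its positive equivalence results (e.g.\ Theorem~\ref{thm-we-3}) are proved directly at the level of full cluster generating functions via explicit bijections on pre-clusters, never by lifting information from $z=-1$. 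So your assessment is accurate; just be careful not to present the sketch as progress toward a proof --- as written it is a correct restatement of the problem plus a correct diagnosis of the obstruction, which is also all the paper claims.
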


We have tested Conjecture~\ref{conj-wilf} by finding all Wilf-equivalence classes for factors of weight up to 11 contained in words of weight up to 20 and verifying that each such class is also a strong Wilf-equivalence class (again, checking words of weight up to 20).

It is interesting that a related conjecture has recently been made for consecutive permutation patterns. Given a permutation $\beta$ of length $k$ and another permutation $\pi$ of length $n$, $\pi$ contains a consecutive occurrence of $\beta$ if there is an index $i$ such that the subsequence $\pi(i)$, $\pi(i+1)$, $\dots$, $\pi(i+k-1)$ is in the same relative order as $\beta$. The analogue of the $A_u$ generating function in this context is then
\[
	A_\beta(x,z)= \; \sum_{\mathclap{\text{permutations $\pi$}}} \; x^{|\pi|}z^{\text{\# of occurrences of $\beta$ in $\pi$}}.
\]
The permutations $\beta$ and $\gamma$ are said to be \emph{c-Wilf-equivalent} (the ``c'' is to denote that this is for consecutive pattern containment) if $A_\beta(x,0)=A_\gamma(x,0)$, and \emph{strongly c-Wilf-equivalent} if $A_\beta(x,z)=A_\gamma(x,z)$. Nakamura~\cite[Conjecture 5.6]{nakamura:computational-a:} has conjectured that c-Wilf-equivalence implies strong c-Wilf-equivalence.

\section{The Cluster Method}

The easiest way to compute the generating function $A_u(x,y,z)$ for a given word $u$ is probably to construct an automaton, as detailed in \cite{kitaev:rationality-irr:}. However, to prove Theorem~\ref{thm-rearrangement}, we instead use the \emph{cluster method}. This method is originally due to Goulden and Jackson~\cite{goulden:combinatorial-e:}. The authors owe their knowledge of the method to the recent work of Elizalde and Noy~\cite{elizalde:clusters-genera:} and Elizalde~\cite{elizalde:the-most-and-th:} on consecutive patterns in permutations.

Given a word $u\in\P^\ast$ of length $k$ (which will be the forbidden factor), an \emph{$m$-cluster} of $u$ is a word $c\in\P^\ast$ consisting of $m$ overlapping factors of length $k$ containing $u$, which are marked. The overlapping condition requires that when labeled from left to right, each pair of consecutive factors must share at least one entry. Additionally, the first and last letters of $c$ must be contained in marked factors.

Note that an $m$- and $m'$-cluster can share the same underlying word. For example, Figure~\ref{figure:cluster-example} shows a $2$- and a $3$-cluster of $u=3123$ defined on the same word in $\P^\ast$.

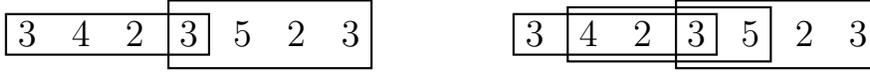
\begin{figure}
\begin{center}
	\hfill
	\begin{tikzpicture}[scale=.9]
		\node at (0,0) {\Large $3$ \; $4$ \; $2$ \; $3$ \; $5$ \; $2$ \; $3$};
		\draw[thick] (-2.7,-.3) rectangle (.3,.3);
		\draw[thick] (-.3,-.5) rectangle (2.7,.5);
	\end{tikzpicture}
	\hfill
	\begin{tikzpicture}[scale=.9]
		\node at (0,0) {\Large $3$ \; $4$ \; $2$ \; $3$ \; $5$ \; $2$ \; $3$};
		\draw[thick] (-2.7,-.3) rectangle (.3,.3);
		\draw[thick] (-1.9,-.4) rectangle (1.1,.4);
		\draw[thick] (-.3,-.5) rectangle (2.7,.5);
	\end{tikzpicture}
	\hfill\ 
	
\end{center}
	\caption{A 2-cluster and a 3-cluster of $3123$ which share the same underlying element of $\P^\ast$.}
	\label{figure:cluster-example}

\end{figure}

The cluster generating function for $u$ is defined as
\[
C_u(x,y,z)
=
\sum_{m\ge 1} z^m\;
\sum_{\mathclap{\substack{\text{$m$-clusters}\\\text{$c$ of $u$}}}} x^{|c|}y^{\|c\|}.
\]
Now fix a forbidden factor $u$. We may view an arbitrary word $w$ (which may or may not contain $u$ as a factor) as a sequence of letters and clusters of $u$. The generating function for an arbitrary letter of $\P$ is
\[
\frac{xy}{1-y}=xy+xy^2+\cdots,
\]
while the generating function for clusters is $C_u(x,y,z)$. We first claim that
\begin{equation}
\label{eqn-cluster-minus-1}
	A_u(x,y,0)=\frac{1}{1-\frac{xy}{1-y}-C_u(x,y,-1)}.
\tag{$\dagger$}
\end{equation}
To see why \eqref{eqn-cluster-minus-1} is true, consider the expansion of the right-hand side, which includes terms for every word $w\in\P^\ast$. If the word $w$ has $s$ factors which dominate $u$, then each such factor may or may not be chosen to participate in a cluster. Thus $w$ corresponds to $2^s$ terms in the expansion of the right-hand side of \eqref{eqn-cluster-minus-1}, half with positive signs and half with negative signs. Therefore what remains after cancellation is indeed $A_u(x,y,0)$. In particular, if $u$ and $v$ are Wilf-equivalent, then $C_u(x,y,-1)=C_v(x,y,-1)$.

To investigate strong Wilf-equivalence, we need to use what Wilf~\cite[Section 4.2]{wilf:generatingfunct:} calls the Sieve Method. Consider the generating function
\[
	\frac{1}{1-\frac{xy}{1-y}-C_u(x,y,z)}.
\]
If the word $w$ contains $s$ factors which dominate $u$ then the same analysis as above shows that the contribution of $w$ to the expansion of the above generating function is
\[
	x^{|w|}y^{\|w\|}\left({s\choose 0}z^0+{s\choose 1}z^1+\cdots+{s\choose s}z^s\right)
	=
	x^{|w|}y^{\|w\|}(z+1)^s.
\]
We therefore deduce by substitution that
\[
	A_u(x,y,z)=\frac{1}{1-\frac{xy}{1-y}-C_u(x,y,z-1)}.
\]

Thus the words $u$ and $v$ are strongly Wilf-equivalent if and only if $C_u(x,y,z) = C_v(x,y,z)$. In light of this, we may view Conjecture~\ref{conj-wilf} as stating that if $C_u(x,y,-1)=C_v(x,y,-1)$ then $C_u(x,y,z)=C_u(x,y,z)$.

In the context of the factor order on $\P^\ast$, increasing the entries of an $m$-cluster still leaves an $m$-cluster. Therefore we say that the $m$-cluster $c$ for $u$ is \emph{minimal} if none of its entries can be decreased without destroying a marked factor. (Note that the clusters in Figure~\ref{figure:cluster-example} are \emph{not} minimal.) We define the generating function for these clusters to be
\[
M_u(x,y,z)
=
\sum_{m\ge 1} z^m\;
\sum_{\mathclap{\substack{\text{minimal}\\ \text{$m$-clusters}\\\text{$c$ of $u$}}}} x^{|c|}y^{\|c\|},
\]
and it is easy to see that
\[
C_u(x,y,z)=M_u\left(\frac{x}{1-y},y,z\right).
\]
Therefore the words $u$ and $v$ are Wilf-equivalent if and only if $M_u(x,y,-1)=M_v(x,y,-1)$ and they are strongly Wilf-equivalent if and only if $M_u(x,y,z)=M_u(x,y,z)$. Note that $|u|$ is the smallest exponent of $x$ in $M_u(x,y,-1)$, so if two words are Wilf-equivalent they must have the same length. Similarly, $\|u\|$ is the smallest exponent of $y$ in $M_u(x,y,-1)$, so if $u$ and $v$ are Wilf-equivalent, $\|u\|=\|v\|$.

We can now outline our proof of Theorem~\ref{thm-rearrangement}. By sorting the entries of a word $u\in\P^\ast$ we obtain a partition $\lambda=(\lambda_1\ge\cdots\ge\lambda_k)$ of $\|u\|$. We prove our theorem by showing that one can compute $\lambda$ by examining the coefficients of $M_u(x,y,z)$. This proves the theorem because it means that if the words $u$ and $v$ are strongly Wilf-equivalent, then they are rearrangements of the same partition. In the next Section, we illustrate our proof by presenting the special case of a forbidden factor of length $4$. The proof of the general case is presented in the section after that. We conclude by showing how the cluster method can be applied to derive short proofs of a result and two conjectures of Kitaev, Liese, Remmel, and Sagan~\cite{kitaev:rationality-irr:}.

\section{The Case $k=4$}

We begin by establishing terminology which will be used in the proof of the general case. A minimal $m$-cluster of $u$ is built by aligning $m$ overlapping (but not necessarily mutually overlapping) copies of $u$ in an array and then taking the maximum of each column. For example, below are all minimal $2$-clusters of a word $u$ of length $k=4$, where for a set $I\subseteq\{1,2,3,4\}$ we use the notation $u_I=\max\{u_i\st i\in I\}$.

\begin{center}
\begin{tabular}{ccc}

\fnmatrix{ccccc}{
u_1&u_2&u_3&u_4\\
&u_1&u_2&u_3&u_4\\
\hline
u_1&u_{1,2}&u_{2,3}&u_{3,4}&u_{4}
}

&

\fnmatrix{cccccc}{
u_1&u_2&u_3&u_4\\
&&u_1&u_2&u_3&u_4\\
\hline
u_1&u_2&u_{1,3}&u_{2,4}&u_{3}&u_4
}

&

\fnmatrix{ccccccc}{
u_1&u_2&u_3&u_4\\
&&&u_1&u_2&u_3&u_4\\
\hline
u_1&u_2&u_3&u_{1,4}&u_2&u_3&u_4
}

\end{tabular}
\end{center}

\noindent Using bracket notation for coefficient extraction, we see that for a word $u$ of length $4$,
\[
[z^2]M_u
=
x^5 y^{u_1+u_{1,2}+u_{2,3}+u_{3,4}+u_4}
+
x^6 y^{u_1+u_2+u_{1,3}+u_{2,4}+u_3+u_4}
+
x^7 y^{u_1+u_2+u_3+u_{1,4}+u_2+u_3+u_4}.
\]
Of course we don't know what the $u_i$ are, but we can examine the terms of $M_u$ to determine these sums.

There will typically be more than one minimal $m$-cluster of each length for $m\ge 3$. We summarize the information on $2$-clusters in the following chart, where the number corresponding to length $m$ and entry $u_I$ represents the number of occurrences of $u_I$ among all $2$-clusters of length $m$. 
\[
\fnmatrix{c|cccc|cccccc}{
\mbox{length}&u_1&u_2&u_3&u_4&u_{1,2}&u_{1,3}&u_{1,4}&u_{2,3}&u_{2,4}&u_{3,4}\\\hline
5&1&&&1&1&&&1&&1\\
6&1&1&1&1&&1&&&1&\\
7&1&2&2&1&&&1
}
\]
While we selected $m=2$ in order to explain our approach, this data does not lead to any conclusions. Instead, we need to consider the $m=3$ and $m=4$ cases, for which we only present the charts. For $m=3$, we have the following.
\[
\fnmatrix{c|cccc|cccccc|cccc|c}{
\mbox{length}&u_1&u_2&u_3&u_4&u_{1,2}&u_{1,3}&u_{1,4}&u_{2,3}&u_{2,4}&u_{3,4}&u_{1,2,3}&u_{1,2,4}&u_{1,3,4}&u_{2,3,4}&u_{1,2,3,4}\\\hline
6&1&&&1&1&&&&&1&1&&&1&\\
7&2&1&1&2&1&1&&2&1&1&&1&1&&\\
8&3&3&3&3&2&2&2&2&2&2&&&&&\\
9&2&4&4&2&&2&2&&2&&&&&&\\
10&1&3&3&1&&&2&&&&&&&&
}
\]
Because every minimal cluster has precisely one letter equal to $u_1$ (its first), we can see from the above table that there are nine $3$-clusters of a pattern of length $4$. The data for $m=4$ is displayed below.
\[
\fnmatrix{c|cccc|cccccc|cccc|c}{
\mbox{length}&u_1&u_2&u_3&u_4&u_{1,2}&u_{1,3}&u_{1,4}&u_{2,3}&u_{2,4}&u_{3,4}&u_{1,2,3}&u_{1,2,4}&u_{1,3,4}&u_{2,3,4}&u_{1,2,3,4}\\\hline
7&1&&&1&1&&&&&1&1&&&1&1\\
8&3&1&1&3&2&1&&2&1&2&2&2&2&2&\\
9&6&4&4&6&5&4&3&5&4&5&2&2&2&2&\\
10&7&9&9&7&4&7&6&6&7&4&&2&2&&\\
11&6&12&12&6&3&6&9&3&6&3&&&&&\\
12&3&9&9&3&&3&6&&3&&&&&&\\
13&1&4&4&1&&&3&&&&&&&&
}
\]

We notice from the tables above that the shortest minimal $3$-cluster and the shortest minimal $4$-cluster, which have lengths $6$ and $7$ respectively, are almost identical except for the presence of $u_{1,2,3,4}$ in the (exponent of $y$ corresponding to the relevant) $4$-cluster. In terms of generating functions, this means that
\[
\eval{\frac{d}{dy}\left(\left([x^7z^4]-[x^6z^3]\right)M_u\right)}_{y=1}
=
u_{1,2,3,4}.
\]
Of course, $u_{1,2,3,4}$ is the largest entry of $u$, so we have just computed $\lambda_1$ from $M_u$. Comparing the length $8$, $m=4$ data with the lengths $6$ and $7$, $m=3$ data, we see that
\[
\eval{\frac{d}{dy}\left(\left([x^8z^4]-[x^7z^3]-[x^6z^3]\right)M_u\right)}_{y=1}
=
u_{1,2,3}+u_{1,2,4}+u_{1,3,4}+u_{2,3,4}.
\]
Of these four terms, three are equal to the greatest entry of $u$, while one is equal to the second greatest entry. Therefore
\[
\eval{\frac{d}{dy}\left(\left([x^8z^4]-[x^7z^3]-[x^6z^3]\right)M_u\right)}_{y=1}
=
3\lambda_1+\lambda_2.
\]
As we have previously determined $\lambda_1$, this allows us to compute $\lambda_2$. Next we compare the length $9$, $m=4$ data to the lengths $6$, $7$, and $8$, $m=3$ data to see that
\begin{eqnarray*}
\lefteqn{
	\eval{\frac{d}{dy}\left(\left([x^9z^4]-[x^8z^3]-[x^7z^3]-[x^6z^3]\right)M_u\right)}_{y=1}=
}
\\&&
u_{1,2}+u_{1,3}+u_{1,4}+u_{2,3}+u_{2,4}+u_{3,4}+
u_{1,2,3}+u_{1,2,4}+u_{1,3,4}+u_{2,3,4}.
\end{eqnarray*}
We then see that
\[
u_{1,2}+u_{1,3}+u_{1,4}+u_{2,3}+u_{2,4}+u_{3,4}
=
3\lambda_1+2\lambda_2+\lambda_3,
\]
and thus, by our observation above about $u_{1,2,3}+u_{1,2,4}+u_{1,3,4}+u_{2,3,4}$,
\[
\eval{\frac{d}{dy}\left(\left([x^9z^4]-[x^8z^3]-[x^7z^3]-[x^6z^3]\right)M_u\right)}_{y=1}
=
6\lambda_1+3\lambda_2+\lambda_3,
\]
enabling us to compute $\lambda_3$. It remains only to compute $\lambda_4$, but given that we know $\lambda_1$, $\lambda_2$, and $\lambda_3$, we can compute $\lambda_4$ by looking at the smallest exponent of $y$ in $M_u$, which is equal to $\|u\|=\lambda_1+\lambda_2+\lambda_3+\lambda_4$.

\section{The General Case}

Now suppose that the word $u$ has arbitrary length $k$. We aim to show that for every $1\le i\le k-1$, the quantity
\begin{equation}
\label{main-eq}
	\eval{\frac{d}{dy}\left(\left([x^{2k+i-2}z^k]-[x^{2k+i-3}z^{k-1}]-\cdots-[x^{2k-2}z^{k-1}]\right)M_u\right)}_{y=1}
\tag{$\ddagger$}
\end{equation}
is a linear combination (with strictly positive coefficients) of $\lambda_1,\dots,\lambda_i$. This will allow us to compute $\lambda_1,\dots,\lambda_{k-1}$. We then compute $\lambda_k$ by examining the smallest exponent of $y$ in $M_u$ which is equal to $\|u\|=\lambda_1+\cdots+\lambda_k$. We must first refine our terminology.

As we began the previous section, note that minimal $m$-clusters of $u$ are obtained by aligning $m$ overlapping copies of $u$ in an array and then taking the maximum of each column. For example, below we show a $5$-cluster of a word $u$ of length $5$.
\[
	\fnmatrix{ccccccccccc}{
	u_1&u_2&u_3&u_4&u_5\\
	&u_1&u_2&u_3&u_4&u_5\\
	&&&u_1&u_2&u_3&u_4&u_5\\
	&&&&u_1&u_2&u_3&u_4&u_5\\
	&&&&&&u_1&u_2&u_3&u_4&u_5\\
	\hline
	u_1&u_{1,2}&u_{2,3}&u_{1,3,4}&u_{1,2,4,5}&u_{2,3,5}&u_{1,3,4}&u_{2,4,5}&u_{3,5}&u_4&u_5
	}
\]
We refer to the word below the line as a \emph{symbolic $m$-cluster}, owing to the fact that we think about it throughout our proof as a word over the letters $u_I$ for $I\subseteq\{1,\dots,k\}$. We call the array above the line which produced the symbolic cluster an \emph{$m$-pre-cluster}. Thus every entry of a symbolic cluster comes from a column of its associated pre-cluster. Note that the presence of a given column in a pre-cluster uniquely determines the contents and relative position of the rows which have nonempty entries in that column.

In particular, in the expansion of \eqref{main-eq}, each term corresponds to a column of a $k$- or $(k-1)$-pre-cluster. We  say that the \emph{height} of a column of a pre-cluster is the number of nonempty entries it contains. A \emph{top column} of a pre-cluster is one that includes a nonempty entry of the top row. Similarly, a \emph{bottom column} is one that touches the bottom row. (It is possible for a column to be both top and bottom.) Finally, a \emph{middle column} is one that touches neither the top nor bottom row of the pre-cluster. 

We prove our claim about \eqref{main-eq}, and thus Theorem~\ref{thm-rearrangement}, with a series of five propositions.

\begin{proposition}
\label{prop-cancel-1}
In the expansion of \eqref{main-eq}, none of the remaining terms after cancellation correspond to top or bottom columns of height $k-2$ or less.
\end{proposition}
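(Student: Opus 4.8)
The plan is to produce, for each subset $I\subseteq\{1,\dots,k\}$ with $|I|\le k-2$, a bijection pairing the positively‑signed terms of \eqref{main-eq} that arise from a top column of type $I$ with the negatively‑signed ones, and then to dispose of bottom columns via the left--right symmetry $u\mapsto u^R$ of the cluster construction. By the discussion preceding the proposition, \eqref{main-eq} is a signed sum with one term for each column $C$ of each $k$-pre-cluster of length $2k+i-2$ (with sign $+$) and each column $C$ of each $(k-1)$-pre-cluster of length $L\in\{2k-2,\dots,2k+i-3\}$ (with sign $-$); differentiating in $y$ and setting $y=1$ replaces $y^{\|c\|}$ by $\|c\|$, the sum of the values of the columns, so the term attached to $C$ equals $u_I$, where $I$ is the type of $C$. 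Since the relevant pre-clusters have $m\in\{k-1,k\}$ rows, a column of height $h\le k-2$ occupies a block of $h<m$ consecutive rows, so it is a top, a bottom, or a middle column, and not two of these simultaneously. Hence it suffices, for each $I$ with $|I|=h\le k-2$, to cancel the $+u_I$ contributions of top columns of type $I$ against the $-u_I$ contributions of top columns of type $I$, and likewise for bottom columns.

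Fix such an $I=\{a_0>a_1>\dots>a_{h-1}\}$. A top column of type $I$ in an $m$-pre-cluster must occupy column $\max I$ and span exactly rows $1,\dots,h$; by the remark that a column determines the relative positions of its nonempty rows, it forces the first $h-1$ gaps of the pre-cluster to be $g_t=a_{t-1}-a_t$ (so $g_1+\dots+g_{h-1}=\max I-\min I$) and forces $g_h\ge\min I$, while $g_{h+1},\dots,g_{m-1}$ may be arbitrary in $\{1,\dots,k-1\}$. Conversely, prescribing $g_1,\dots,g_{h-1}$ in this way, choosing $g_h\in\{\min I,\dots,k-1\}$, and choosing $g_{h+1},\dots,g_{m-1}$ freely in $\{1,\dots,k-1\}$ produces an $m$-pre-cluster carrying that top column. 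Define $\phi$ on $k$-pre-clusters of length $2k+i-2$ carrying a top column of type $I$ by deleting the bottom row, that is, by discarding the last gap $g_{k-1}$. Since $h\le k-2$, the column of type $I$ depends only on the untouched gaps $g_1,\dots,g_h$, so the image is a $(k-1)$-pre-cluster that still carries it.

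The one delicate point is that $\phi$ lands among $(k-1)$-pre-clusters of length at least $2k-2$, equivalently that $g_{k-1}\le i$. This is a length count: the condition $|P|=2k+i-2$ gives $g_1+\dots+g_{k-1}=k+i-2$, and subtracting $g_1+\dots+g_{h-1}=\max I-\min I$ along with the bounds $g_h\ge\min I$ and $g_{h+1},\dots,g_{k-2}\ge 1$ leaves
\[
	g_{k-1}\;\le\;(k+i-2)-(\max I-\min I)-\min I-(k-2-h)\;=\;i+h-\max I\;\le\;i,
\]
the last inequality because $I$ consists of $h$ distinct positive integers, so $\max I\ge h$. Thus $\phi$ maps into the set of $(k-1)$-pre-clusters of length in $\{2k-2,\dots,2k+i-3\}$ carrying a top column of type $I$, and it is a bijection: its inverse appends a new bottom row at gap $g_{k-1}:=(2k+i-2)-L\in\{1,\dots,i\}$, where $L$ is the length of the given $(k-1)$-pre-cluster, a legal gap precisely because $i\le k-1$.

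Because $\phi$ preserves the type $I$ of the distinguished top column, and hence its value $u_I$, the $+u_I$ terms coming from $k$-pre-clusters and the $-u_I$ terms coming from $(k-1)$-pre-clusters cancel in pairs. Running this over all $I$ with $|I|\le k-2$, and then applying the same argument to $u^R$ to handle bottom columns, shows that after cancellation no surviving term of \eqref{main-eq} corresponds to a top or bottom column of height $k-2$ or less. I expect the main obstacle to be the displayed length inequality --- the step most prone to off‑by‑one errors --- since it is precisely there that the hypotheses $1\le i\le k-1$ and $\max I\ge h$ are needed to ensure that the row being deleted (respectively appended) always corresponds to a gap in $\{1,\dots,k-1\}$.
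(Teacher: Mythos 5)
Your proof is correct and is essentially the paper's argument: the same delete-a-row/reinsert-a-row bijection, hinging on the same observation that a column of height at most $k-2$ cannot be both a top and a bottom column, so it is untouched by the row removal. Your gap parametrization and explicit length bound $g_{k-1}\le i$, and the use of reversal to handle bottom columns rather than deleting the top row directly, are just more explicit or cosmetic variants of the paper's map $\Phi$ and its inverse.
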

\begin{proof}
We prove the proposition by constructing a bijection $\Phi$ which maps a $k$-pre-cluster of length $2k+i-2$ with a single marked non-middle column of height at most $k-2$ to a shorter $(k-1)$-pre-cluster with the same marked non-middle column. Let $C$ be a $k$-pre-cluster of length $2k+i-2$ with marked non-middle column $c$. If $c$ is a top column, then $\Phi(C,c)$ is obtained by deleting the bottom row of $C$ while leaving $c$ marked. Because the height of $c$ is at most $k-2$, it cannot be both a top and a bottom column in $C$, and thus $c$ will be unchanged in $\Phi(C,c)$. If $c$ is a bottom column, $\Phi(C,c)$ is obtained similarly by deleting the top row of $C$ while again leaving $c$ marked.

The map is inverted as follows. Let $C'$ be a $(k-1)$-pre-cluster of length at most $2k+i-3$ and with marked non-middle column $c'$ of height at most $k-2$. Again, by the height restriction on $c'$, it cannot be both a top and a bottom column. If $c'$ is a top column, then there is a unique $k$-pre-cluster of length $2k+i-2$ which has $c'$ as a top column, which is obtained by adding a new bottom row to $C'$ in the unique position to obtain a $k$-pre-cluster of length $2k+i-2$. The resulting $k$-pre-cluster $C$ (with marked column $c$ corresponding to $c'$) clearly has the property that $\Phi(C,c) = (C',c')$. Analogously, if $c'$ is a bottom column, then there is exactly one way to build a $k$-pre-cluster $C$ with marked bottom column $c'$ such that $\Phi(C,c) = (C',c')$.
\end{proof}

Our next step is to study the contribution of columns of height $k$ or $k-1$ to \eqref{main-eq}. There are two cases, $i=1$ and $i\ge 2$. We first consider the special case $i=1$. In this case we see that there is a unique $k$-pre-cluster of length $2k-1$:
\[
\fnmatrix{cccccccc}{
u_1&\cdots&u_{k-1}&u_k\\
&\cdots&u_{k-2}&u_{k-1}&u_k\\
&&\vdots&\vdots&\vdots\\
&&u_1&u_2&u_3&\cdots\\
&&&u_1&u_2&u_3&\cdots&u_k
}
\]
The columns of height $k$ or $k-1$ in this pre-cluster correspond to the entries $u_{1,\dots,k}$, $u_{1,\dots,k-1}$ and $u_{2,\dots,k}$ in the associated symbolic cluster. There is also a unique $(k-1)$-pre-cluster of length $2k-2$:
\[
\fnmatrix{cccccccc}{
u_1&\cdots&u_{k-2}&u_{k-1}&u_k\\
&\cdots&u_{k-1}&u_{k-2}&u_{k-1}&u_k\\
&&&\vdots&\vdots&\vdots\\
&&&u_1&u_2&u_3&\cdots&u_k
}
\]
This pre-cluster contains two columns of height $k-1$, corresponding to the entries $u_{1,\dots,k-1}$ and $u_{2,\dots,k}$ in the associated symbolic cluster. These two entries cancel with two of the three entries from the symbolic $k$-cluster. Because all other terms of \eqref{main-eq} cancel by Proposition~\ref{prop-cancel-1} in this case, \eqref{main-eq} reduces to $u_{1,\dots,k}=\lambda_1$ when $i=1$, as desired. Notice that when $i=1$ all columns which contribute to \eqref{main-eq} are top or bottom columns, and thus we are completely done with this case and will not consider it again in our proof.

Our next propositions give the $i\ge 2$ case.

\begin{proposition}
\label{prop-top-and-bottom}
In the expansion of \eqref{main-eq} for $i\ge 2$, the total contribution of top and bottom columns is $(k-1)\lambda_1+\lambda_2$.
\end{proposition}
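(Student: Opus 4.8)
The plan is to enumerate, for each $i \ge 2$, precisely which top and bottom columns of height $k$ or $k-1$ survive the cancellation in \eqref{main-eq}, using Proposition~\ref{prop-cancel-1} to discard everything of height $\le k-2$. First I would observe that a top column of height $k$ in a $k$-pre-cluster is only possible in the fully-overlapping pre-cluster of length $2k+i-2$ with all $k$ rows shifted one position apart at the left; but that pre-cluster has length $2k-1$, so for $i \ge 2$ there are \emph{no} top (or bottom) columns of height $k$ in the $k$-pre-clusters contributing to $[x^{2k+i-2}z^k]M_u$. Hence the only surviving non-middle contributions come from columns of height exactly $k-1$. For a $k$-pre-cluster, a top column of height $k-1$ forces the bottom row to start exactly one step to the right of where a height-$k$ start would be; counting the ways to place the remaining rows so that the total length is $2k+i-2$, and doing the same for the $(k-1)$-pre-clusters appearing in the terms $[x^{2k+i-2-j}z^{k-1}]M_u$ for $0 \le j \le i-1$, I would tabulate exactly which $u_I$ appear and with what multiplicity, then cancel.

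The key combinatorial fact I expect to need is that a top column of height $k-1$ in a pre-cluster is one of exactly two shapes: either it is the column $u_{1,\dots,k-1}$ (the first $k-1$ rows overlap maximally at the left, the last row dropped down) or $u_{2,\dots,k}$ read from a column near the left end where the top row contributes its first entry but one lower row has not yet started — and analogously $u_{1,\dots,k-1}$, $u_{2,\dots,k}$ for bottom columns. Since $u_{1,\dots,k-1} = u_{2,\dots,k} = \lambda_1$ (each omits only one entry, and omitting any single entry still leaves the maximum unless that entry was the unique maximum, but even then $k-1 \ge 2$ entries remain so the max is still $\lambda_1$ when $k \ge 3$; the $k=2$ edge case can be checked directly) — wait, this needs care: $u_{1,\dots,\hat\jmath,\dots,k}$ equals $\lambda_1$ unless the unique largest entry sits at position $j$, in which case it equals $\lambda_2$. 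So I would split into whether the omitted index is or is not the position of the maximum, exactly as in the $k=4$ worked example where $u_{1,2,3}+u_{1,2,4}+u_{1,3,4}+u_{2,3,4} = 3\lambda_1 + \lambda_2$. The claim $(k-1)\lambda_1 + \lambda_2$ strongly suggests that after all cancellation between $k$- and $(k-1)$-pre-clusters the surviving terms are exactly $k$ copies of "$u$ with one entry omitted", one for each omitted position $1,\dots,k$, so that $k-1$ of them equal $\lambda_1$ and one equals $\lambda_2$; but one such term must cancel against something, leaving $(k-1)\lambda_1 + \lambda_2$ — or more likely all $k$ survive on the $z^k$ side and one cancels against a $z^{k-1}$ term, and I would verify this bookkeeping against the $k=4$ data ($[x^8z^4] - [x^7z^3] - [x^6z^3]$ giving $3\lambda_1 + \lambda_2$, which is the $i=2$ instance).

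The main obstacle will be the careful enumeration of non-middle height-$(k-1)$ columns across the whole family of pre-clusters of the relevant lengths and weights, and matching them up so the cancellation is exact. Concretely, I would: (1) show every top column of height $k-1$ in any $k$- or $(k-1)$-pre-cluster has symbolic value $u_{1,\dots,k-1}$ or $u_{2,\dots,k}$, and the same with "top" replaced by "bottom"; (2) for each target length $2k+i-2-j$ with $0 \le j \le i-1$ count, with signs $(-1)$ for $j \ge 1$ and $(+1)$ for $j=0$, how many pre-clusters of that length have such a top column and such a bottom column, keeping the weight bookkeeping (the $x$-exponent is the length, the $y$-exponent the weight, and $\frac{d}{dy}|_{y=1}$ extracts the weight linearly); (3) observe the middle columns are handled in the subsequent propositions so they may be ignored here; (4) sum the surviving symbolic terms and rewrite $u_{1,\dots,k-1}, u_{2,\dots,k}$ as $\lambda_1$ or $\lambda_2$ according to whether the omitted position is that of the maximum. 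Step (2) is where I expect to spend the most effort, since the number of pre-clusters of a given length grows and I need the alternating sum to collapse to a single clean residue; the pattern in the $k=4$ charts (where the height-$(k-1)$, i.e. cardinality-$3$, columns in the $m=4$ tables at lengths $7,8,9,\dots$ mesh with the $m=3$ tables) is the template to generalize, and I would phrase step (2) as an injection-with-residue argument in the same spirit as the proof of Proposition~\ref{prop-cancel-1}.
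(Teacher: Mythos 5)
Your overall strategy is the paper's: use Proposition~\ref{prop-cancel-1} to discard top and bottom columns of height at most $k-2$, observe that no column of height $k$ contributes to \eqref{main-eq} when $i\ge 2$ (the unique $k$-pre-cluster containing one has length $2k-1<2k+i-2$, and a $(k-1)$-pre-cluster never contains one), and then count the height-$(k-1)$ top and bottom columns with signs; the closing identity $\sum_{j}\max_{i\ne j}u_i=(k-1)\lambda_1+\lambda_2$ is also exactly what is needed. However, the step you call your ``key combinatorial fact'' is false: a top (or bottom) column of height $k-1$ is \emph{not} restricted to the symbolic values $u_{1,\dots,k-1}$ and $u_{2,\dots,k}$. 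The $k-1$ rows meeting such a column may be spaced with arbitrary gaps, so its value can be $u_I$ for \emph{every} $(k-1)$-subset $I$ of $\{1,\dots,k\}$; the paper's own $k=4$ charts show $u_{1,2,4}$ and $u_{1,3,4}$ arising from height-$3$ (hence top or bottom) columns of $3$- and $4$-pre-clusters. Only in the unique maximally overlapping $(k-1)$-pre-cluster, of length $2k-2$, are the height-$(k-1)$ columns confined to $u_{1,\dots,k-1}$ and $u_{2,\dots,k}$. As stated, your step (1) would prevent the other $k-2$ subsets --- in particular the one omitting the position of the largest entry, which is what produces the $\lambda_2$ --- from ever entering the count.

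The second, related gap is that the decisive enumeration is deferred as ``bookkeeping to verify,'' and you hedge between two mutually inconsistent accountings of it. What actually has to be proved (and is the content of the paper's proof) is a two-versus-one count: for each $(k-1)$-subset $I$ and each length $2k+i-2$ with $2\le i\le k-1$, there are exactly \emph{two} $k$-pre-clusters of that length having $u_I$ as a non-middle column (one realizing it as a top column, one as a bottom column, the placement of the remaining row being forced by the prescribed length), while on the negative side each $u_I$ occurs exactly \emph{once} in total among the $(k-1)$-pre-clusters of lengths $2k-2$ through $2k+i-3$: $u_{1,\dots,k-1}$ and $u_{2,\dots,k}$ in the unique pre-cluster of length $2k-2$, and each remaining subset in a unique pre-cluster of length $2k-1$ (any $(k-1)$-pre-cluster containing a height-$(k-1)$ column has length at most $2k-1$, so nothing else occurs). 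The net contribution is then $\sum_{|I|=k-1}u_I=(k-1)\lambda_1+\lambda_2$. Your plan neither states nor proves this count --- and your stated key fact would contradict it --- so the proposal is not yet a proof, even though the intended route and the final identity are the right ones.
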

\begin{proof}
We begin by looking at such columns in $(k-1)$-pre-clusters. Trivially, these pre-clusters cannot contain columns of height $k$. If a $(k-1)$-pre-cluster contains a column of height $k-1$ then it can have length at most $2k-1$. As in the $i=1$ case, there is a unique $(k-1)$-pre-cluster of length $2k-2$ with a column of height $k-1$. This pre-cluster actually has two columns of height $k-1$, corresponding to the terms $u_{1, \ldots, k-1}$ and $u_{2, \ldots, k}$. Additionally, for every other subset $I\subseteq\{1,\dots k\}$ of size $k-1$, there is a unique $(k-1)$-pre-cluster of length $2k-1$ which has a column corresponding to $u_I$.

Next we consider columns of height at least $k-1$ in $k$-pre-clusters. There is a unique $k$-pre-cluster with a column of height $k$, but this pre-cluster has length $2k-1$ so will not contribute to \eqref{main-eq} when $i\ge 2$. Now choose any subset $I\subseteq\{1,\dots,k\}$ of size $k-1$. There are precisely two $k$-pre-clusters with a marked top or bottom column $u_I$ of each length between $2k$ and $3k-3$ (i.e., for $i$ between $2$ and $k-1$), because $u_I$ can arise from either a top column or a bottom column in such pre-clusters.

Therefore the total contribution of these columns is
\[
	\sum_{\mathclap{\substack{I\subseteq\{1,\dots,k\},\\ |I|=k-1}}} u_I
	=
	(k-1)\lambda_1+\lambda_2,
\]
as desired.
\end{proof}

In the $i=2$ case, the terms of \eqref{main-eq} correspond to $k$-pre-clusters of length $2k$ and $(k-1)$-pre-clusters of lengths $2k-1$ and $2k-2$. As no column of such a pre-cluster can be a middle column, our claim follows from Proposition~\ref{prop-top-and-bottom} in this case. We move on to consider the contribution of middle columns when $i \geq 3$.

\begin{proposition}
\label{prop-cancel-2}
For all $i\ge 3$, middle columns of height less than $k-i+1$ do not contribute to the expansion of \eqref{main-eq}.
\end{proposition}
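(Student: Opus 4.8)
The plan is to prove a statement slightly stronger than the proposition: that \emph{no} pre-cluster contributing a term to \eqref{main-eq} contains a middle column of height less than $k-i+1$ at all. Since the surviving terms of \eqref{main-eq} correspond to columns of $k$-pre-clusters of length $2k+i-2$ and to columns of $(k-1)$-pre-clusters of length between $2k-2$ and $2k+i-3$, the proposition follows at once from this non-existence claim.

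First I would fix coordinates on a pre-cluster. For an $m$-pre-cluster $P$, list its copies of $u$ from left to right and let $s_1,\dots,s_{m-1}$ be the offsets between consecutive copies, each an integer with $1\le s_j\le k-1$ (consecutive copies overlap, but occupy distinct positions). Setting $\sigma_0=0$ and $\sigma_j=s_1+\dots+s_j$, copy $j$ occupies positions $\sigma_{j-1}+1,\dots,\sigma_{j-1}+k$, and $\sigma_{m-1}=|P|-k$. Because $\sigma_0<\sigma_1<\cdots$ is strictly increasing, the set of copies meeting any fixed column is a block of consecutive indices; hence if $c$ is a middle column of height $h$, the copies meeting $c$ are exactly copies $a+1,\dots,a+h$ for some $a$ with $a\ge 1$ and $a+h+1\le m$, these two inequalities encoding that $c$ is neither a top nor a bottom column.

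The crux is a lower bound on the offsets straddling $c$. Write $p$ for the position of $c$. Copy $a$ does not meet $p$ and must lie to the left of it, for otherwise copy $a+1$ would start even farther to the right than $p$ and so could not meet $p$; thus $\sigma_{a-1}+k<p$. Symmetrically copy $a+h+1$ lies to the right of $p$, so $p<\sigma_{a+h}+1$. As $p$ is an integer these give $\sigma_{a-1}+k+1\le p\le\sigma_{a+h}$, whence
\[
s_a+s_{a+1}+\dots+s_{a+h}=\sigma_{a+h}-\sigma_{a-1}\ge k+1 .
\]
The remaining $m-h-2$ offsets are each at least $1$, so $|P|-k=\sum_{j=1}^{m-1}s_j\ge(k+1)+(m-h-2)$, which rearranges to $h\ge 2k+m-1-|P|$. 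For $m=k$ and $|P|=2k+i-2$ this yields $h\ge k+1-i$, while for $m=k-1$ and $|P|\le 2k+i-3$ it yields $h\ge 3k-2-|P|\ge k+1-i$; in either case $h\ge k+1-i$, contradicting $h<k-i+1$. Hence no such column exists.

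The step I expect to be the main obstacle is pinning down the straddling inequality $s_a+\dots+s_{a+h}\ge k+1$: it requires using the consecutivity of the block of copies meeting $c$, checking that being a middle column genuinely forces a copy strictly above and one strictly below $c$, and tracking the off-by-one between the position $p$ and the integer intervals the copies occupy. After that the counting is routine, the only remaining care being to carry out the final inequality uniformly for the two pre-cluster shapes — observe in particular that the $(k-1)$-pre-clusters relevant to \eqref{main-eq} have length at most $2k+i-3$, not merely $2k+i-2$, which is precisely what the $(k-1)$ case of that inequality needs.
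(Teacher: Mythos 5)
Your proof is correct and uses essentially the same argument as the paper: a middle column of height $h$ forces the copy of $u$ just above its block to lie entirely to its left and the copy just below to lie entirely to its right, which yields a lower bound on the pre-cluster's length ($\ge 2k+m-h-1$) showing that $k$-pre-clusters of length $2k+i-2$ and $(k-1)$-pre-clusters of length at most $2k+i-3$ cannot contain middle columns of height less than $k-i+1$. Your offset bookkeeping is simply a more explicit formalization of the paper's count of column contributions from the rows above, below, and between.
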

\begin{proof}
Let $C$ be a $k$-pre-cluster of minimal length which contains a middle column $c$ of height $k-i$. Since $c$ is a middle column, $C$ must have rows both above and below it. Together, these two rows and the column $c$ contribute at least $2k+1$ to the length of $C$. Moreover, $C$ has $(k-(k-i+2))=i-2$ more rows. Because each such row increases the length of $C$ by at least $1$, $C$ has total length at least $2k+i-1$. The expression \eqref{main-eq} does not involve any $k$-clusters of this length. Obviously, the same argument holds for shorter middle columns as well (those of length less than $k-i$).

Analogously, any $(k-1)$-pre-cluster which contains a middle column of height at most $k-i$ has length at least $2k+i-2$. Again, the expression \eqref{main-eq} does not involve any $(k-1)$-clusters of this length.
\end{proof}

\begin{proposition}
\label{prop-middles-1}
Fix $3 \leq i \leq k-1$ and a column $c$ of height at least $k-i+1$. There is a bijection between $k$-pre-clusters of length $2k+i-2$ which contain $c$ as a marked middle column and $(k-1)$-pre-clusters of length between $2k-2$ and $2k+i-3$ which contain $c$ as a marked middle column.
%
%For $2\le i\le k-1$ and a given height $h\ge k-i+1$, every column of height $h$ occurs equally frequently as a middle column in $k$-pre-clusters of length $2k+i-2$ and as a middle column in $(k-1)$-pre-clusters of lengths between $2k-2$ and $2k+i-3$.
\end{proposition}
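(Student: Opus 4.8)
The plan is to produce the bijection by moving a single outermost row, in the spirit of the proof of Proposition~\ref{prop-cancel-1}, with the hypothesis that the height of $c$ is at least $k-i+1$ serving to keep lengths inside the prescribed window. Write $I=\{i_1<\cdots<i_h\}$ for the set of entry-indices of $c$, so $h$ is its height. I would begin by recording the rigidity that $c$ forces on any pre-cluster containing it: the $h$ consecutive rows meeting $c$ --- its \emph{block} --- sit in fixed relative positions with fixed gaps (the successive differences of $I$), so the block spans a fixed width $w=k+i_h-i_1$; any copy of $u$ placed immediately to the left of the block must be shifted from the block's first row by at least $k-i_h+1$ columns (else it would reach $c$ and raise its height), and symmetrically any copy placed immediately to the right must be shifted by at least $i_1$. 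Since $c$ is a middle column, in any witnessing pre-cluster its block has at least one row above and at least one below it, and since $h\ge k-i+1$ at most $i-1$ rows lie outside the block altogether.

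The key point is a length computation. If $C$ is a $k$-pre-cluster of length $2k+i-2$ with $c$ a marked middle column, then $2k+i-2$ is $w$ plus the total left-shift of the rows above the block plus the total right-shift of those below; combining the lower bounds on the two joining shifts with $h\ge k-i+1$ shows that whenever at least two rows sit above the block the shift between the two topmost rows of $C$ is at most $i$, and symmetrically at the bottom. Since deleting an outermost row shortens a pre-cluster by exactly the adjacent shift, this says precisely that such a deletion lands the length in $[2k-2,\,2k+i-3]$.

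So the map would go as follows. From a $(k-1)$-pre-cluster $C'$ of admissible length $\ell'$ with $c$ a marked middle column, prepend a new topmost row shifted by $g:=2k+i-2-\ell'$ from the current top row; since $1\le g\le i\le k-1$ this overlap is legal, the new row lies too far to the left to meet $c$, and one obtains a $k$-pre-cluster of length $2k+i-2$ in which $c$ is still a marked middle column. The inverse deletes the topmost row --- except that when the block of $C$ abuts its top row this deletion would promote $c$ to a top column, and there one must instead work at the bottom end. Patching the ``top'' and ``bottom'' constructions into a single bijection is the step I expect to be the main obstacle: it requires showing, via the height hypothesis, that every $C$ in the source admits a canonical end at which a row may be removed so that $c$ remains a middle column and the resulting length falls in $[2k-2,\,2k+i-3]$, and that this removal exactly inverts the insertion. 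The genuinely delicate situation should be the one in which the block lies flush against an end of $C$ --- and, when $h$ is as large as it can be, flush against both ends --- which is where the precise inequality $h\ge k-i+1$, rather than any weaker bound, has to be used.
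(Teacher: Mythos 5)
What you are trying to prove is not what the paper's proof actually establishes, and the literal statement you are attacking is in fact a misstatement (and is false). If you read the paper's proof, it opens by fixing a \emph{second} column $c'$ of the same height $h\ge k-i+1$ and constructs a bijection $\Psi_c^{c'}$ between $k$-pre-clusters of length $2k+i-2$ with $c$ as marked middle column and $k$-pre-clusters of the \emph{same} length with $c'$ as marked middle column, then says the $(k-1)$-pre-cluster case is analogous. In other words, the content of the proposition is \emph{column-independence}: the number of pre-clusters (of a given type and length) witnessing $c$ as a marked middle column depends only on the height of $c$, not on $c$ itself. That, and only that, is what the next proposition uses (``this contribution is the same for each column $c$ by the previous proposition''). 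The literal claim as displayed --- a bijection between the set of $k$-pre-clusters of length $2k+i-2$ and the set of $(k-1)$-pre-clusters of the shorter lengths, both carrying the same fixed $c$ --- cannot be true: the following proposition explicitly constructs an \emph{injection} from the $(k-1)$-side into the $k$-side, observes that it is not onto, and concludes that middle columns contribute strictly positively. A genuine bijection would make that contribution zero and the whole argument would collapse.

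So your proposal has two independent problems. First, it aims at the wrong target: even a correct argument along your lines would not deliver the column-independence that the rest of the section relies on, since your construction fixes a single column $c$ throughout. Second, even taken on its own terms the argument has the gap you yourself flag: deleting a topmost (or bottommost) row fails to invert the insertion precisely when the block of rows through $c$ reaches an end of the array, and there is no canonical way to ``patch the two ends together'' --- this is not a technical annoyance but the reason the statement is false, as those un-matched $k$-pre-clusters are exactly the surplus that Proposition 4.5 needs. The paper's $\Psi_c^{c'}$ sidesteps all of this by leaving the rows outside the block untouched and swapping out only the block through $c$ for the block through $c'$ (same number of rows, translated so $c'$ lands in the same column); the one real issue, that the new rows might fail to overlap the old top and bottom rows, is killed by the inequality $2k+i-2\le 3k-3$.
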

\begin{proof}
Let $h$ denote the height of $c$. To construct our bijection we first consider the $k$-pre-clusters. Let $c$ and $c'$ be columns of the same height $h\ge k-i+1$. We define a bijection, $\Psi_{c}^{c'}$, which maps from a $k$-pre-cluster of length $2k+i-2$ with a marked middle column $c$ to a $k$-pre-cluster of the same length with marked middle column $c'$. Showing that $\Psi_{c}^{c'}$ is indeed a bijection will obviously prove our claim about middle columns in $k$-pre-clusters.

Given such a $k$-pre-cluster $C$ with marked middle column $c$, the map $\Psi_{c}^{c'}(C,c)$ is defined by replacing the rows which involve $c$ in $C$ by the rows which involve $c'$ (which are the same in any pre-cluster it is a column of) in such a way that $c'$ (which becomes our new marked column) is in the same column of $\Psi_{c}^{c'}(C,c)$ as $c$ was in $C$. Note that, because $c$ was a middle column, the length of $\Psi_{c}^{c'}(C,c)$ will be the same as the length of $C$.

However, we need to show that $\Psi_{c}^{c'}(C,c)$ is indeed a pre-cluster. The concern we need to address is that it is a priori possible for the rows of $\Psi_{c}^{c'}(C,c)$ to fail to overlap. As we will show, however, this is prevented by our length condition. Consider, for the moment, only the rows which involve $c'$. The least entry that can lie on the top of the column $c'$ is $u_h$, and thus this set of rows contains at least $h-1$ columns to the left of $c'$. Similarly, the greatest entry than can lie on the bottom of $c'$ is $u_{k-h+1}$, so this set of rows contains at least $k-(k-h+1)=h-1$ columns to the right of $c'$. Were the rows of $\Psi_{c}^{c'}(C,c)$ to fail to overlap, there would be one row completely to the left (and above) the rest of the array, or a row completely to the right (and below) the array. Let us suppose for the sake of contradiction that there is a row of $\Psi_{c}^{c'}(C,c)$ completely to the left of the rest of the array. This row therefore contributes its total length, $k$, to the length of $\Psi_{c}^{c'}(C,c)$. Moreover, there must be at least $h-1$ columns separating it from the column $c'$ in $\Psi_{c}^{c'}(C,c)$. Thus the column $c'$ contributes $1$ to the length of $\Psi_{c}^{c'}(C,c)$, and because $c'$ is a middle column in $\Psi_{c}^{c'}(C,c)$, there must be a row which contributes its total length to the length of $\Psi_{c}^{c'}(C,c)$. Finally, we have only accounted for $h+2$ rows, and each additional row must contribute at least $1$ to the length of $\Psi_{c}^{c'}(C,c)$. This shows that the length of $\Psi_{c}^{c'}(C,c)$ is at least
$$
k+h-1+1+k+(k-(h+2))=3k-2.
$$
However, the length of $\Psi_{c}^{c'}(C,c)$ is $2k+i-2$ (the length of $C$), which is at most $3k-3$ because $i\le k-1$.

This contradiction completes the proof of the proposition for $k$-pre-clusters. The case of $(k-1)$-pre-clusters is completely analogous.
\end{proof}

Our claim about \eqref{main-eq} in the $i \geq 3$ case, and thus also our proof of Theorem~\ref{thm-rearrangement}, follows from the following result.
%Thus far we have shown that the only columns which contribute to \eqref{main-eq} are the top and bottom columns (which contribute $(k-1)\lambda_1+\lambda_2$ by Proposition~\ref{prop-top-and-bottom}) and the middle columns. Note that for $i=2$, the terms of \eqref{main-eq} correspond to $k$-pre-clusters of length $2k$ and $(k-1)$-pre-clusters of length $2k-1$ and $2k-2$. As no column of such a pre-cluster can be a middle column, we have proved our claim in this case. For $i\ge 3$, our claim about \eqref{main-eq}, and thus also of Theorem~\ref{thm-rearrangement}, is completed by the following proposition.

\begin{proposition}
For $3\le i\le k-1$, the contribution of middle columns to the expansion of \eqref{main-eq} is a linear combination of $\lambda_1,\dots,\lambda_i$ with strictly positive coefficients.
\end{proposition}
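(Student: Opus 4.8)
The plan is to pin down, for each admissible height, the net signed contribution of the middle columns of that height to \eqref{main-eq}, and then reassemble. By Proposition~\ref{prop-cancel-2} only middle columns of height $h\ge k-i+1$ survive; and a middle column of a $k$-pre-cluster touches neither the top nor the bottom row of its array, so it meets at most $k-2$ of the $k$ rows, i.e.\ $h\le k-2$. Thus the middle-column contribution to \eqref{main-eq} is a sum, over $k-i+1\le h\le k-2$, of the contributions of the height-$h$ columns, and I will show that the height-$h$ contribution equals $\binom{i-2}{k-h-1}\sum_{|I|=h}u_I$.

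Fix such an $h$. By the bijections $\Psi_c^{c'}$ built in the proof of Proposition~\ref{prop-middles-1}, the number of $k$-pre-clusters of length $2k+i-2$ (resp.\ $(k-1)$-pre-clusters of a given length $\ell$) having a prescribed column $c$ of height $h$ as a marked middle column depends only on $h$; denote these $n_k(h)$ and $n_{k-1}(h,\ell)$. Since the $k$-terms of \eqref{main-eq} carry a $+$ and the $(k-1)$-terms a $-$, the height-$h$ middle columns contribute
\[
\Bigl(n_k(h)-\sum_{\ell=2k-2}^{2k+i-3}n_{k-1}(h,\ell)\Bigr)\sum_{\substack{I\subseteq\{1,\dots,k\}\\|I|=h}}u_I.
\]
To evaluate the counts, observe that a $k$-pre-cluster with marked middle column $c$ of height $h$ is the union of the $h$ rows forced by $c$ together with an upper staircase of $a\ge1$ rows and a lower staircase of $b\ge1$ rows, where $a+b=k-h$; its length equals $2k-1$ plus a sum of $k-h$ positive ``gap'' parameters — the amounts by which the two innermost staircase rows overhang $c$, and the $k-h-2$ offsets between consecutive staircase rows. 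Exactly as in the proof of Proposition~\ref{prop-middles-1}, and again because $i\le k-1$, once the length is set to $2k+i-2$ (so the gaps sum to $i-1$) these parameters are otherwise unconstrained, so a stars-and-bars count, summed over the $k-h-1$ pairs $(a,b)$, gives $n_k(h)=(k-h-1)\binom{i-2}{k-h-1}$. The same reasoning for $(k-1)$-pre-clusters gives $n_{k-1}(h,\ell)=(k-h-2)\binom{\ell-2k}{k-h-2}$, and the hockey-stick identity yields $\sum_{\ell=2k-2}^{2k+i-3}n_{k-1}(h,\ell)=(k-h-2)\binom{i-2}{k-h-1}$. Subtracting, the height-$h$ contribution is $\binom{i-2}{k-h-1}\sum_{|I|=h}u_I$, whose coefficient is a positive integer for every $h$ in the range $k-i+1\le h\le k-2$.

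Finally, grouping the $h$-subsets $I$ by the rank $t$ (in a size-ordering of the letters of $u$) of the largest letter that $I$ selects gives $\sum_{|I|=h}u_I=\sum_{t=1}^{k-h+1}\binom{k-t}{h-1}\lambda_t$. Writing $g=k-1-h$, the total middle-column contribution becomes
\[
\sum_{g=1}^{i-2}\binom{i-2}{g}\sum_{\substack{I\subseteq\{1,\dots,k\}\\|I|=k-1-g}}u_I
=\sum_{t=1}^{i}\Bigl(\sum_{g=\max(1,\,t-2)}^{i-2}\binom{i-2}{g}\binom{k-t}{k-2-g}\Bigr)\lambda_t.
\]
For each $t\le i$ the inner sum ranges over a nonempty set and contains the strictly positive term $g=i-2$, namely $\binom{k-t}{k-i}$, so all of $\lambda_1,\dots,\lambda_i$ appear with strictly positive coefficients (that of $\lambda_i$ being exactly $1$). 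This is the assertion. The one genuinely delicate step — shared with the proof of Proposition~\ref{prop-middles-1} — is checking that fixing the total length leaves the staircase gaps free, which is precisely where $i\le k-1$ enters (it forces each gap to be at most $i-2$, small enough to carry no additional restriction); everything else is bookkeeping with binomial coefficients.
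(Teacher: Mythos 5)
Your proof is correct, but it replaces the paper's qualitative cancellation argument with an exact enumeration, so the two routes are genuinely different. The paper never computes how many pre-clusters contain a given marked middle column $c$: it injects the negatively signed $(k-1)$-pre-clusters of lengths $2k-2$ through $2k+i-3$ into the positively signed $k$-pre-clusters of length $2k+i-2$ by appending a bottom row, notes that the $k$-pre-clusters in which $c$ begins in the second row from the bottom are missed by this injection (so the net multiplicity of each column is a strictly positive integer), and then uses the $\Psi_c^{c'}$ bijections of Proposition~\ref{prop-middles-1} to see that this multiplicity depends only on the height $h$, after which the identity $\sum_{|I|=h}u_I=\binom{k-1}{h-1}\lambda_1+\cdots+\binom{h-1}{h-1}\lambda_{k-h+1}$ finishes the proof. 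You instead parametrize every pre-cluster with a marked middle column of height $h$ by the split $a+b=m-h$ and the $m-h$ positive gaps, derive the length formula $2k-1+(\text{sum of gaps})$, and count by stars and bars; your appeal to $\Psi_c^{c'}$ is then redundant, since the explicit formulas already show independence of $c$. The computation checks out (for $k=4$, $i=3$ it gives $3\lambda_1+2\lambda_2+\lambda_3$ from middle columns, hence $6\lambda_1+3\lambda_2+\lambda_3$ in total, matching the paper), and it buys more than the paper's argument: the exact net multiplicity $\binom{i-2}{k-h-1}$ per column of height $h$ and exact coefficients of the $\lambda_t$, in particular that $\lambda_i$ appears with coefficient exactly $1$. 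The price is the more delicate verification that fixing the length leaves the gaps otherwise free, which plays the same role as the overlap argument inside Proposition~\ref{prop-middles-1}.

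One small imprecision in that delicate step: you justify the absence of further constraints by saying each gap is at most $i-2$. That bound is true but not by itself sufficient, because the genuine constraints are $g\le i_1-1$ for the gap above the marked column and $g\le k-i_h$ for the gap below it (where $i_1=\max I$ and $i_h=\min I$ for the column's index set $I$), and $i_1-1$ can be smaller than $i-2$ (take $h=k-i+1$, $I=\{1,\dots,h\}$, and $2i>k+2$). What actually saves the count is the sharper bound: with $m-h$ positive gaps whose sum is at most $i-1$ (for $m=k$) or $i-2$ (for $m=k-1$), each gap is at most $i+h-k$, and $i+h-k\le h-1\le\min(i_1-1,\,k-i_h)$ precisely because $i\le k-1$, $i_1\ge h$, and $i_h\le k-h+1$. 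With the bound stated as $i+h-k$ rather than $i-2$, your argument is complete.
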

\begin{proof}
Consider the contribution of a particular middle column $c$ to the expansion of \eqref{main-eq}. The $k$-pre-clusters of length $2k+i-2$ with this middle column will contribute positively, while the $(k-1)$-pre-clusters of lengths between $2k-2$ and $2k+i-3$ contribute negatively.

Suppose that $(C,c)$ is a $(k-1)$-pre-cluster of length between $2k-2$ and $2k+i-3$ with marked middle column $c$. Because $c$ is a middle column of $C$, $C$ must contain at least $k$ columns both to the left and the right of $c$. Therefore by adding a new bottom column in the appropriate position we can create a $k$-pre-cluster of length $2k+i-2$ containing marked middle column $c$. This mapping defines an injection from $(k-1)$-pre-clusters of lengths between $2k-2$ and $2k+i-3$ and $k$-pre-clusters of length $2k+i-2$, both with marked middle column $c$. The corresponding terms of \eqref{main-eq} therefore cancel.

However, there are also terms of \eqref{main-eq} corresponding to middle columns of $k$-pre-clusters where the middle column begins in the second row from the bottom of the $k$-pre-cluster, and these terms are not canceled in \eqref{main-eq}. Therefore each middle column $c$ of height between $k-i+1$ and $k-2$ contributes positively to \eqref{main-eq}, and this contribution is the same for each column $c$ by the previous proposition. Finally, note that the contribution of one copy of every column of height $h$ for $k-i+1\le h\le k-2$ is
\[
	\sum_{\mathclap{\substack{I\subseteq\{1,\dots,k\},\\ |I|=h}}} u_I
	=
	{k-1\choose h-1}\lambda_1+{k-2\choose h-1}\lambda_2+\cdots+{h-1 \choose h-1}\lambda_{k-h+1},
\]
and thus the contribution of these columns to \eqref{main-eq} is a linear combination of $\lambda_1,\dots,\lambda_i$ with positive coefficients, completing the proof.
\end{proof}

\section{Further applications of the cluster method}

In our final section, we provide short new proofs using the cluster method for a result and a two conjectures of Kitaev, Liese, Remmel, and Sagan~\cite{kitaev:rationality-irr:}. For our first result we need to introduce some new notation. Given $u\in\P^\ast$, we write $1u$ for the word obtained by prepending the letter $1$ to $u$, and we write $u^+$ for the word obtained by adding $1$ to every letter of $u$.

\begin{proposition}[\protect{\cite[Lemma~4.1]{kitaev:rationality-irr:}}]
\label{prop-we-1}
For $u,v\in\P^\ast$, we have the following Wilf-equivalences,
\begin{enumerate}
\item[(a)] $u$ is Wilf-equivalent to its reverse, $u^\text{r}$,
\item[(b)] $u$ and $v$ are Wilf-equivalent if and only if $1u$ and $1v$ are Wilf-equivalent,
\item[(c)] if $u$ and $v$ are Wilf-equivalent then $u^+$ and $v^+$ are Wilf-equivalent.
\end{enumerate}
In fact, the proposition also holds with Wilf-equivalence replaced by strong Wilf-equivalence.
\end{proposition}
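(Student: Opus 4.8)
The plan is to reduce all three statements to identities satisfied by the minimal-cluster generating function $M_u(x,y,z)$, using the two facts established above: $u$ and $v$ are strongly Wilf-equivalent if and only if $M_u(x,y,z)=M_v(x,y,z)$, and they are Wilf-equivalent if and only if $M_u(x,y,-1)=M_v(x,y,-1)$. Each identity I need holds verbatim in the formal variables $x,y,z$, so specializing to $z=-1$ disposes of the ordinary and the strong statements simultaneously. Part (a) is the easiest: the map $w\mapsto w^{\mathrm r}$ is a length- and weight-preserving bijection on $\P^\ast$ carrying the factors of $w$ that dominate $u$ bijectively onto the factors of $w^{\mathrm r}$ that dominate $u^{\mathrm r}$, so in fact $A_u(x,y,z)=A_{u^{\mathrm r}}(x,y,z)$; equivalently, reversing a minimal $m$-cluster of $u$ yields a minimal $m$-cluster of $u^{\mathrm r}$ of the same length and weight, so $M_{u^{\mathrm r}}=M_u$.

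For part (c) I would use the fact, noted above, that an $m$-pre-cluster --- an arrangement of $m$ length-$k$ copies of the pattern with consecutive copies overlapping --- is a purely combinatorial object, independent of the letters of the pattern, and that the associated symbolic cluster records in each of its $|c|$ columns one letter $u_I$. Passing from $u$ to $u^+$ replaces every $u_I$ by $u_I+1$, hence leaves the length and the number of marked copies of each minimal cluster unchanged while increasing its weight by exactly $|c|$. This yields $M_{u^+}(x,y,z)=M_u(xy,y,z)$, and (c) follows in both forms.

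Part (b) is the substantive one. I would analyze minimal $m$-clusters of $1u$ directly. Placing the leftmost marked copy of $1u$ at column $1$, the leading letter $1$ of every subsequent copy occupies a column that (by the overlap condition) is also covered by the preceding copy, where that copy contributes a letter of $u$, which is at least $1$; so those leading $1$'s are dominated and do not affect the column maxima. Deleting the leading $1$ of the first copy --- it alone occupies column $1$ and contributes a factor $xy$ --- what remains is the column-wise maximum of $m$ copies of $u$ with strictly increasing left endpoints in which consecutive copies either overlap or are exactly adjacent, since the $1u$-overlap condition $q_{j+1}\le q_j+k$ becomes precisely ``overlap or adjacency'' for the $u$-copies. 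Breaking at the adjacencies decomposes this uniquely as a concatenation of $r\ge 1$ minimal clusters of $u$ whose copy-counts sum to $m$, and conversely ``$1$ followed by a concatenation of $r\ge 1$ minimal clusters of $u$'' reassembles to a minimal $m$-cluster of $1u$ (minimality transfers run by run, because every marked factor that involves a given entry lies in a single run). Translating this bijection into generating functions gives
\[
	M_{1u}(x,y,z)=xy\cdot\frac{M_u(x,y,z)}{1-M_u(x,y,z)}.
\]
Because $t\mapsto xy\,t/(1-t)$ is injective on power series with zero constant term, $M_{1u}=M_{1v}$ if and only if $M_u=M_v$, and likewise after setting $z=-1$, which proves (b).

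The step I expect to be the main obstacle is the bookkeeping in part (b): verifying that the leading $1$'s of the non-initial copies really are irrelevant, that ``overlap or adjacency'' is the exact image of the $1u$-overlap condition (so that the two maps are mutually inverse and no column is left uncovered), and that minimality is preserved in both directions. Parts (a) and (c) are essentially one-line bijective observations, and the injectivity of the fractional-linear substitution in (b) is routine.
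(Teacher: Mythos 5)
Your proposal is correct and follows essentially the same route as the paper: reversal of minimal clusters for (a), the identity $M_{1u}(x,y,z)=xyM_u(x,y,z)/(1-M_u(x,y,z))$ via the decomposition of minimal clusters of $1u$ into $1$ followed by a concatenation of minimal clusters of $u$ for (b), and $M_{u^+}(x,y,z)=M_u(xy,y,z)$ for (c). Your part (b) merely spells out in more detail (dominated leading $1$'s, overlap-or-adjacency, injectivity of $t\mapsto xyt/(1-t)$) what the paper asserts more briefly.
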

\begin{proof}
For part (a), note that the reversal of a minimal $m$-cluster of $u$ is a minimal $m$ cluster of $u^\text{r}$ with the same length and sum of its entries. Thus $M_u(x,y,z)=M_{u^\text{r}}(x,y,z)$, so $u$ is strongly Wilf-equivalent to its reverse (so the two words are trivially Wilf-equivalent).

Next, we consider how to turn minimal $m$-clusters of $u$ into minimal $m$-clusters of $1u$. Suppose that we have $d$ minimal clusters of $u$, $c^{(1)},\dots,c^{(d)}$, where $c^{(i)}$ is an $m_i$-cluster of $u$. Then the concatenation $1c^{(1)}\cdots c^{(d)}$ is a minimal $(m_1+\cdots+m_d)$-cluster of $1u$. Moreover, because every minimal cluster of $1u$ is of this form, we see that
\[
	M_{1u}(x,y,z)=\frac{xyM_u(x,y,z)}{1-M_u(x,y,z)}.
\]
Therefore $u$ and $v$ are strongly Wilf-equivalent (resp., Wilf-equivalent) if and only if $1u$ and $1v$ are strongly Wilf-equivalent (resp., Wilf-equivalent).

Finally, part (c) follows immediately from the observation that if $c$ is a minimal $m$-cluster of $u$ then $c^+$ is a minimal $m$-cluster of $u^+$, so $M_{u^+}(x,y,z)=M_{u}(xy,y,z)$.
\end{proof}

Next we establish a conjecture of \cite{kitaev:rationality-irr:} proving the converse of Proposition~\ref{prop-we-1} (c).

\begin{proposition}[\protect{\cite[Item (3) of Subsection 8.4]{kitaev:rationality-irr:}}]
If $u^+$ and $v^+$ are Wilf-equivalent (resp., strongly Wilf-equivalent) then $u$ and $v$ are Wilf-equivalent (resp., strongly Wilf-equivalent).
\end{proposition}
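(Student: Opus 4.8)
The plan is to run the previous proposition "in reverse" by understanding how the minimal-cluster generating function transforms under the operation $u \mapsto u^+$. The key observation already recorded in the proof of Proposition~\ref{prop-we-1}(c) is the identity $M_{u^+}(x,y,z) = M_u(xy,y,z)$. This relation comes from the fact that adding $1$ to every letter of $u$ adds $1$ to every letter of every minimal $m$-cluster $c$ of $u$, hence adds $|c|$ to $\|c\|$ while leaving $|c|$, $m$, and the combinatorial structure unchanged; in generating-function terms this is exactly the substitution $x \mapsto xy$. Crucially, this substitution is \emph{invertible} as a transformation of formal power series: from $M_{u^+}(x,y,z)$ one recovers $M_u(x,y,z)$ by substituting $x \mapsto x/y$ (legitimate since $M_u$, as noted in the excerpt, has smallest $x$-exponent $k = |u|$ and correspondingly its terms pair each power $x^k$ with $y$-powers at least $\|u\| \ge k$, so $M_u(x/y, y, z)$ is again a well-defined series).

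From this the argument is immediate in both settings. Suppose first $u^+$ and $v^+$ are strongly Wilf-equivalent. By the criterion established in the excerpt (following the cluster/sieve discussion), this means $M_{u^+}(x,y,z) = M_{v^+}(x,y,z)$, i.e. $M_u(xy,y,z) = M_v(xy,y,z)$. Applying the inverse substitution $x \mapsto x/y$ to both sides yields $M_u(x,y,z) = M_v(x,y,z)$, so $u$ and $v$ are strongly Wilf-equivalent. The Wilf-equivalence case is the same argument carried out at $z = -1$: strong (resp. ordinary) Wilf-equivalence of $u^+$ and $v^+$ gives equality of $M_{u^+}(\,\cdot\,,\,\cdot\,,-1) = M_{u}(xy,y,-1)$ with the analogous expression for $v^+$, and the substitution $x \mapsto x/y$ again recovers $M_u(x,y,-1) = M_v(x,y,-1)$.

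There is essentially no obstacle here once Proposition~\ref{prop-we-1}(c) is in hand; the only point that warrants a sentence of care is the legitimacy of the substitution $x \mapsto x/y$, which must be justified by the observation (already made in the excerpt) that in $M_u(x,y,z)$ every monomial $x^a y^b z^c$ satisfies $b \ge a$ — indeed $b \ge \|c\|$ and $a = |c| \le \|c\|$ for the corresponding cluster $c$ — so dividing the $x$-variable by $y$ never produces negative powers of $y$. Hence the map $M(x,y,z) \mapsto M(x/y, y, z)$ is a well-defined involution on the relevant ring of formal power series, inverse to $M(x,y,z) \mapsto M(xy,y,z)$, and this is all that is needed.
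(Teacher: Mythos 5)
Your proof is correct and follows the same route as the paper: both invert the identity $M_{u^+}(x,y,z)=M_u(xy,y,z)$ from Proposition 5.1(c) via the substitution $x\mapsto x/y$ to obtain $M_u(x,y,z)=M_{u^+}(x/y,y,z)$, and then read off the (strong) Wilf-equivalence of $u$ and $v$ from that of $u^+$ and $v^+$. Your extra remark justifying why $x\mapsto x/y$ produces no negative powers of $y$ (since $\|c\|\ge|c|$ for any cluster $c$) is a sound elaboration the paper leaves implicit.
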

\begin{proof}
From the proof of part (c) of the previous proposition, we see that
\[
	M_u(x,y,z)=M_{u^+}\left(\frac{x}{y},y,z\right).
\]
Therefore if $M_{u^+}(x,y,-1)=M_{v^+}(x,y,-1)$ then $M_u(x,y,-1)=M_v(x,y,-1)$, and of course the same holds in the context of strong Wilf-equivalence (when $-1$ is not substituted for $z$).
\end{proof}

In order to explain many of the Wilf-equivalences that were found in \cite{kitaev:rationality-irr:}, the authors made the following conjecture.

\begin{theorem}[\protect{\cite[Conjecture 8.3]{kitaev:rationality-irr:}}]
\label{thm-we-3}
The words $a1b2c$ and $a2b1c$ are (strongly) Wilf-equivalent for any choice of positive integers $a,b,c\ge 2$.
\end{theorem}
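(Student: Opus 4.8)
The plan is to reduce the statement, via the reversal symmetry $M_w=M_{w^{\text r}}$ from Proposition~\ref{prop-we-1}(a), to the single claim that $M_{a1b2c}(x,y,z)$ is unchanged when the letters $a$ and $c$ are interchanged. Indeed $(a2b1c)^{\text r}=c1b2a$, so Proposition~\ref{prop-we-1}(a) gives $M_{a2b1c}=M_{c1b2a}$, and $M_{c1b2a}$ is exactly $M_{a1b2c}$ with $a$ and $c$ swapped; hence the asserted strong Wilf-equivalence (which implies ordinary Wilf-equivalence) is equivalent to this $a\leftrightarrow c$ symmetry of $M_{a1b2c}$. Since this is an identity of power series, it is enough to verify it in the coefficient of each $x^{L}z^{m}$: the polynomial obtained by summing $y^{\|\cdot\|}$ over all minimal $m$-clusters of $u:=a1b2c$ of length $L$ should be symmetric in $a$ and $c$. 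Combining the two halves, the underlying bijection on clusters is simply ``reverse the offset pattern'', but it is cleanest to argue the symmetry directly.

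First I would fix the bookkeeping. As recalled in Section~3, a minimal $m$-cluster of a length-$5$ word is the array of column maxima of $m$ copies placed with offsets $(d_1,\dots,d_{m-1})\in\{1,2,3,4\}^{m-1}$, consecutive copies overlapping; equivalently it is encoded by the set $S=\{s_1<\dots<s_m\}$ of starting columns, where $s_1=0$, $s_m=L-5$, and consecutive elements of $S$ differ by at most $4$. For a fixed $S$ and each $\emptyset\ne K\subseteq\{1,3,5\}$ let $N_K(S)$ be the number of columns whose set of occupied letter-positions meets $\{1,3,5\}$ in exactly $K$. Because $a,b,c\ge 2$ while $u_2=1$ and $u_4=2$, any column meeting $\{1,3,5\}$ takes the value $\max\{u_j:j\in K\}$, while every other column (those occupying only positions among $\{2,4\}$) contributes a value in $\{1,2\}$ not involving $a,b,c$. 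Hence the weight of such a cluster equals a quantity independent of $a,b,c$ plus $\sum_{\emptyset\ne K\subseteq\{1,3,5\}}N_K(S)\max\{u_j:j\in K\}$. Since $\max\{u_j:j\in K\}$ is already $a\leftrightarrow c$ symmetric for $K\in\{\{3\},\{1,5\},\{1,3,5\}\}$, the weight of the cluster — hence the sum over all of them — is symmetric in $a$ and $c$ as soon as $N_{\{1\}}(S)=N_{\{5\}}(S)$ and $N_{\{1,3\}}(S)=N_{\{3,5\}}(S)$ for every $S$.

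These two identities are the heart of the argument, and I expect the bookkeeping behind them to be the main obstacle — not because of any depth, but because the passage to a transparent statement about $S$ must handle the boundary columns carefully. A column occupies letter-position $q$ of some copy precisely when its index minus $q$ lies in $S$, and unwinding this gives $N_{\{1\}}(S)=\#\{s\in S:s-2,\,s-4\notin S\}$, $N_{\{5\}}(S)=\#\{s\in S:s+2,\,s+4\notin S\}$, $N_{\{1,3\}}(S)=\#\{s\in S:s-2\in S,\ s-4\notin S\}$, and $N_{\{3,5\}}(S)=\#\{s\in S:s+2\in S,\ s+4\notin S\}$. Now split $S$ into its even part and its odd part and treat each separately: writing one part as $t_1<\dots<t_r$, a short case analysis shows that $t_i$ is counted by $N_{\{1\}}$ iff $i=1$ or $t_i-t_{i-1}\ge 6$, and by $N_{\{5\}}$ iff $i=r$ or $t_{i+1}-t_i\ge 6$, so each part contributes $1+\#\{\text{gaps of size}\ge 6\}$ to both; likewise $t_i$ is counted by $N_{\{1,3\}}$ (resp.\ $N_{\{3,5\}}$) iff it is the second (resp.\ next-to-last) vertex of a maximal block of consecutive difference-$2$ steps, and each such block contributes exactly one to each. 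Summing over the two parity classes yields $N_{\{1\}}(S)=N_{\{5\}}(S)$ and $N_{\{1,3\}}(S)=N_{\{3,5\}}(S)$, completing the proof. The remaining things to be careful about are the first and last columns (which always satisfy the relevant ``isolation'' conditions) and the observation — and here is exactly where the hypothesis $a,b,c\ge 2$ enters — that positions $2$ and $4$ of $u$ are irrelevant to the large-letter part of every column value.
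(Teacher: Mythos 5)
Your proposal is correct, and it shares the paper's overall skeleton while proving the crucial lemma by a genuinely different argument. Like the paper, you reduce via Proposition~\ref{prop-we-1}(a) (reversal) to showing $M_{a1b2c}=M_{c1b2a}$, i.e.\ that $M_{a1b2c}$ is symmetric under $a\leftrightarrow c$, and like the paper you in fact establish the stronger per-cluster statement: for each fixed pattern of overlaps, the cluster weight is unchanged when $a$ and $c$ are swapped, which amounts to the two equalities ``number of $a$-entries $=$ number of $c$-entries'' and ``number of $\max\{a,b\}$-entries $=$ number of $\max\{b,c\}$-entries.'' The difference is in how this is proved: the paper argues by induction on the number of rows of the pre-cluster, checking fifteen alignment cases for the newly added bottom row, whereas you encode a minimal $m$-cluster by its set $S$ of starting positions, translate the four relevant column types into conditions on $S$ involving only shifts by $2$ and $4$, split $S$ by parity, and count via gaps of size at least $6$ (for $N_{\{1\}}=N_{\{5\}}$) and maximal difference-$2$ blocks (for $N_{\{1,3\}}=N_{\{3,5\}}$). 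Your computations check out (the identification of columns meeting $\{1,3,5\}$ in a given set $K$ with the stated conditions on $s\in S$ is exactly right, and the hypothesis $a,b,c\ge 2$ is used precisely where you say it is), and your route trades the paper's case analysis for a short structural count; it also visibly extends, just as the paper's proof does, to the more general equivalence $axbyc\sim aybxc$ for $a,b,c\ge x,y$. One cosmetic point: if one parity class of $S$ is empty, its contribution to $N_{\{1\}}$ and $N_{\{5\}}$ is $0$ rather than ``$1+\#\{\text{gaps}\ge 6\}$,'' but the two sides are still equal, so the conclusion is unaffected.
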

\begin{proof}
We prove the result by constructing a bijection $\Pi$ between $m$-pre-clusters of $a1b2c$ and $m$-pre-clusters of $c1b2a$, which preserves the length and sum of entries of the corresponding minimal $m$-clusters. This will prove the result because it shows that
\[
	M_{a1b2c}(x,y,z)=M_{c1b2a}(x,y,z),
\]
and by Proposition~\ref{prop-we-1} (a), $c1b2a$ is strongly Wilf-equivalent to $a2b1c$.

Let $C$ be an $m$-pre-cluster of $a1b2c$. To construct $\Pi(C)$ we simply replace every row of $C$ by the word $c1b2a$ (without moving the rows). For example, we have the mapping of $3$-pre-clusters
\[
	\begin{tabular}{ccc}
		\fnmatrix{cccccccc}{
		a&1&b&2&c\\
		&&a&1&b&2&c\\
		&&&a&1&b&2&c\\
		\hline
		a&1&\max\{a,b\}&a&\max\{b,c\}&b&c&c
		}
	&$\longrightarrow$&
		\fnmatrix{cccccccc}{
		c&1&b&2&a\\
		&&c&1&b&2&a\\
		&&&c&1&b&2&a\\
		\hline
		c&1&\max\{b,c\}&c&\max\{a,b\}&b&a&a
		}
	\end{tabular}
\]
under $\Pi$. Clearly $\Pi$ is a bijection, but we must show that the cluster corresponding to $\Pi(C)$ has the same length and sum of its entries as the cluster corresponding to $C$. Instead, we prove the stronger claim that these two clusters are rearrangements of each other.

Since $\Pi$ essentially swaps the locations of each $a$ with the locations of each $c$, it is clear that any entry which does not involve $a$ or $c$ or which involves both $a$ and $c$ occurs equally frequently in the clusters corresponding to $C$ and $\Pi(C)$. To complete the proof, we show that in any cluster of $a1b2c$, the entries $a$ and $c$ occur equally often and the entries $\max\{a,b\}$ and $\max\{b,c\}$ occur equally often, from which it follows immediately that the clusters corresponding to $C$ and $\Pi(C)$ are indeed rearrangements of each other.

We prove this by induction, for which the base case (a $1$-pre-cluster of $a1b2c$) is trivial. Let $C$ be an $a1b2c$ pre-cluster such that the cluster corresponding to $C$ has an equal number of $a$ and $c$ entries and an equal number of $\max\{a,b\}$ and $\max\{b,c\}$ entries. Consider the effect of adding another row to the bottom of $C$. There are fifteen possible alignments of the columns involving this new row; in each case the new cluster resulting from adding a row to $C$ preserves equality of the number of occurrences of $a$ and $c$ entries and of $\max\{a,b\}$ and $\max\{b,c\}$ entries. The fifteen total cases (in which we only show the rightmost five columns) are shown below.
	\begin{center}
	\begin{tabular}{|l|l|l|l|l|}\hline&&&&\\[-8pt]
		\fnmatrix{ccccc}{
		c\\
		a&1&b&2&c
		}
	&\fnmatrix{ccccc}{
		2&c\\
		a&1&b&2&c
		}
	&\fnmatrix{ccccc}{
		b&2&c\\
		a&1&b&2&c
		}
	&\fnmatrix{ccccc}{
		1&b&2&c\\
		a&1&b&2&c
		}
	&\fnmatrix{ccccc}{
		c\\
		2&c\\
		a&1&b&2&c
		}\\&&&&\\[-8pt]\hline &&&&\\[-6pt]
	%====	
	\fnmatrix{ccccc}{
		c\\
		b&2&c\\
		a&1&b&2&c
		}
	&\fnmatrix{ccccc}{
		c\\
		1&b&2&c\\
		a&1&b&2&c
		}
	&\fnmatrix{ccccc}{
		2&c\\
		b&2&c\\
		a&1&b&2&c
		}
	&\fnmatrix{ccccc}{
		2&c\\
		1&b&2&c\\
		a&1&b&2&c
		}
	&\fnmatrix{ccccc}{
		b&2&c\\
		1&b&2&c\\
		a&1&b&2&c
		}\\&&&&\\[-8pt]\hline &&&&\\[-8pt]
	%====
	\fnmatrix{ccccc}{
		c\\
		2&c\\
		b&2&c\\
		a&1&b&2&c
		}
	&\fnmatrix{ccccc}{
		c\\
		2&c\\
		1&b&2&c\\
		a&1&b&2&c
		}
	&\fnmatrix{ccccc}{
		c\\
		b&2&c\\
		1&b&2&c\\
		a&1&b&2&c
		}
	&\fnmatrix{ccccc}{
		2&c\\
		b&2&c\\
		1&b&2&c\\
		a&1&b&2&c
		}
	&\fnmatrix{ccccc}{
		c\\
		2&c\\
		b&2&c\\
		1&b&2&c\\
		a&1&b&2&c
		}\\[-8pt]&&&&\\\hline
	%====
	\end{tabular}
	\end{center}

For example, in the pre-cluster in the top-right corner, adding the row $a1b2c$ destroys one $c$ entry from the corresponding cluster by turning it into a $\max\{a,c\}$ entry, preserves one $c$ entry, and creates a new $c$ entry. It does not create or destroy any occurrences of $a$, $\max\{a,b\}$, or $\max\{b,c\}$.

In the pre-cluster in the bottom-right corner, adding the last row destroys an occurrence of $c$, preserves an occurrence of $c$, and creates an occurrence of $c$. Similarly, it both creates and destroys one occurrence of $\max\{b,c\}$, while preserving one other existing occurrence of $\max\{b,c\}$.

Since we have shown that the clusters corresponding to $C$ and $\Pi(C)$ are rearrangements of each other, this completes the proof.
\end{proof}

Finally, note that our proof of Theorem~\ref{thm-we-3} extends to prove a more general result. For positive integers $a,b,c,x,y$ with $a,b,c\ge x,y$, the same proof shows that $axbyc$ is Wilf-equivalent to $aybxc$.

\bibliographystyle{abbrv}
\bibliography{../../../../refs}

\end{document}